\newtheorem{theorem}{Theorem}[section]
\newtheorem{lemma}[theorem]{Lemma}
\newtheorem{cor}[theorem]{Corollary}
\theoremstyle{definition}
\theoremstyle{remark}
\newtheorem{remark}[theorem]{Remark}
\numberwithin{equation}{section}
\newcommand{\rr}{{\mathbb R}}
\newcommand{\nat}{{\mathbb N}}
\newcommand{\ganz}{{\mathbb Z}}
\newcommand{\Var}{\operatorname{Var}}
\newcommand{\eqd}{\stackrel{\rm d}{=}}
\newcommand{\Exp}{\mathbb E}
\begin{document}
\sloppy
\title[On some compound distributions with Borel summands]{On some compound distributions\\ with Borel summands}

\author{H. Finner}
\address{Helmut Finner, Institute of Biometrics and Epidemiology, German Diabetes Center at the Heinrich-Heine-Universit\"at D\"usseldorf, Auf'm Hennekamp 65, D-40225 D\"usseldorf, Germany} \email{finner\@@{}ddz.uni-duesseldorf.de}

\author{P. Kern}
\address{Peter Kern, Mathematical Institute, Heinrich-Heine-Universit\"at D\"usseldorf, Universit\"atsstr.\ 1, D-40225 D\"usseldorf, Germany} \email{kern\@@{}hhu.de}

\author{M. Scheer}
\address{Marsel Scheer, Institute of Biometrics and Epidemiology, German Diabetes Center at the Heinrich-Heine-Universit\"at D\"usseldorf, Auf'm Hennekamp 65, D-40225 D\"usseldorf, Germany} \email{m.kvesic\@@{}web.de}

\date{\today}

\begin{abstract}
The generalized Poisson distribution is well known to be a compound Poisson distribution with Borel summands. As a generalization we present closed formulas for compound Bartlett and Delaporte distributions with Borel summands and a recursive structure for certain compound shifted Delaporte mixtures with Borel summands. Our models are introduced in an actuarial context as claim number distributions and are derived only with probabilistic arguments and elementary combinatorial identities. In the actuarial context related compound distributions are of importance as models for the total size of insurance claims for which we present simple recursion formulas of Panjer type.
\end{abstract}

\keywords{Borel distribution, compound distribution, generalized Poisson distribution, Bartlett distribution, Delaporte distribution, claim number distribution, infinite divisibility, Lagrangian probability distribution, recursive evaluation, aggregate claim distribution, Panjer recursion, multinomial Abel identity.}

\subjclass[2010]{Primary 60E05; Secondary 05A19; 62P05}

\maketitle

\allowdisplaybreaks

\section{Introduction}

A random variable $Z$ is said to have a compound distribution if it is of the form 
$$Z\eqd\sum_{k=1}^NY_k,$$
where $(Y_n)_{n\in\nat}$ is an i.i.d.\ sequence and $N$ is an independent random variable with values in $\nat_0$. Throughout this paper ``$\eqd$'' denotes equality in distribution and the empty sum $\sum_{k=1}^0$ is taken to be zero. The most prominent example is a compound Poisson distribution, where $N$ has a Poisson distribution, which is closely related to infinite divisibility. It is well known that a discrete random variable on $\nat_0$ is infinitely divisible if and only if it has a compound Poisson distribution; e.g.\ see page 290 in \cite{Fel1}. We will focus on compound distributions, where the i.i.d.\ summands $(Y_n)_{n\in\nat}$ have a Borel distribution but the distribution of $N$ can be more general than Poisson. 

In fact this note is inspired by some asymptotic distribution results in multiple hypotheses testing. The distribution of the number of false rejections in so-called linear step-down (SD) and step-up (SU) procedures under a certain Dirac-uniform configuration obtained by Dempster \cite{Dem}, respectively Finner and Roters \cite{FR}, have recently been shown by Scheer \cite{Sche} to have the following asymptotic distributions in case the number of hypotheses increases to infinity but the (unknown) number of false hypotheses is kept fixed. These asymptotic distributions are
\begin{equation}\label{SD}
p_{S\!D}(n)=\frac{\theta(\theta+\lambda n)^{n-1}}{n!}\,e^{-(\theta+\lambda n)}\quad\text{ for }n\in\nat_0
\end{equation}
and
\begin{equation}\label{SU}
p_{SU}(n)=\frac{(1-\lambda)(\theta+\lambda n)^{n}}{n!}\,e^{-(\theta+\lambda n)}\quad\text{ for }n\in\nat_0,
\end{equation}
for certain parameters $\theta>0$ and $\lambda\in(0,1)$; cf.\ also Theorem 4.1 in \cite{FS}. The first is well known as a generalized Poisson distribution (GPD) which can be represented as a compound Poisson distribution with Borel summands. The second is known as a linear function Poisson distribution by Jain \cite{Jain}, appearing as a weighted Langrangian distribution in \cite{Jan}. In our context it will turn out to be a compound Bartlett distribution with Borel summands. Since the Bartlett distribution is the convolution of a Poisson and a geometric distribution, it is a special case of the Delaporte distribution, which is the convolution of a Poisson and a negative binomial distribution. Hence we will also ask for the generalization of compound Delaporte distributions with Borel summands. We will further analyze the natural generalizations with $k\in\ganz$
$$p_k(n)=C\,\frac{(\theta+\lambda n)^{n+k-1}}{n!}\,e^{-(\theta+\lambda n)}\quad\text{ for }n\in\nat_0,$$
where $C=C(k,\theta,\lambda)$ is some normalization constant. For $k\in\nat$ these will turn out to be certain compound shifted Delaporte mixtures with Borel summands.

The afore mentioned distributions (Poisson, Bartlett, Delaporte) are frequently used in actuarial mathematics to model the total number of insurance claims under various conditions. Hence, as in case of the GPD, corresponding compound distributions with Borel summands are of considerable interest in asymptotic statistics as well as in actuarial modeling.

In Section 2 we review on known facts concerning the Borel and Borel-Tanner distribution. These are derived as limit distributions of total progeny in a certain branching process of Galton-Watson type with deterministic initial value. We introduce this model in an actuarial context for which the limit of total progeny is interpreted as a total number of insurance claims. The limit distribution leads to a well known functional equation for the generating function of a Borel distribution which is commonly solved by Lagrange's inversion in the literature. In the Appendix we prefer to derive the Borel and Borel-Tanner distribution from the functional equation using only probabilistic arguments and elementary combinatorial identities. In Section 3 we will consider the model of Section 2 with random initial values which leads to compound distributions with Borel summands. Starting with a compound Poisson distribution, well known as the generalized Poisson distribution, we will derive the natural generalizations of compound Bartlett and Delaporte distributions and certain compound shifted Delaporte mixtures. An overview of the presented compound distributions with Borel summands is given in Section 3.5, in which we also point out the connection to Lagrangian probability distributions. Since the considered compound distributions fulfill recursive relations, we will show in Section 4 how to recursively evaluate related compound sum distributions with a Panjer type formula.

\section{The Borel distribution}

Following section 2.7 in Consul \cite{Con}, a Borel distribution can be introduced as the limit distribution of total progeny in a certain branching process of Galton-Watson type. We give an actuarial interpretation of this model.

Assume $X_0$ is a (random) number of initial insurance claims. Each of these claims is supposed to induce independently a random number of secondary consequential i.i.d.\ claims and so on. Since the number of secondary claims is likely to be large only with very small probability, the number of consequential claims is assumed to follow a Poisson distribution. Hence let $(Y_{k,n})_{k,n\in\nat}$ be an i.i.d.\ array of random variables with a Poisson distribution of parameter $\lambda>0$, where $Y_{k,n}$ represents the number of secondary claims resulting from the $k$-th claim in the $(n-1)$-th step. Then the total number of claims up to the $m$-th step is given by
\begin{equation}\label{totnum}
Z_m=X_0+\cdots+X_m,\quad\text{ where }\quad X_n=\sum_{k=1}^{X_{n-1}}Y_{k,n}\quad\text{ for }n=1,\ldots,m.
\end{equation}
Let $g(z)=\exp(\lambda(z-1))$ be the probability generating function (pgf) of each $Y_{k,n}$ and let $G_m$ denote the pgf of $Z_m$.

Now assume $X_0=1$, then the pgf of $Z_1=X_0+X_1=1+Y_{1,1}$ is $G_1(z)=z\cdot g(z)$. Since each of the $Y_{1,1}$ claims in the first step will independently generate a total number of secondary claims up to the $m$-th step which is distributed as $Z_{m-1}$, we get $G_m(z)=G_1(G_{m-1}(z))$, or inductively $G_m(z)=G_1^{\circ m}(z)$ for every $m\in\nat$. In case the Galton-Watson branching process in \eqref{totnum} will eventually stop, i.e.
\begin{equation}\label{stop}
P\{X_n=0\text{ for some }n\in\nat\}=1,
\end{equation}
the limit $G(z)=\lim_{m\to\infty}G_m(z)$ exists and is the pgf of some random variable $Y$ with values in $\nat$. Since $P\{Y_{1,1}=0\}=e^{-\lambda}>0$, it is well known that \eqref{stop} holds if and only if $\Exp[Y_{1,1}]=\lambda\leq 1$, e.g.\ see section 2.2 in \cite{Bre}. Since $G_1(z)=z\cdot g(z)$, we obtain for the limiting pgf
\begin{equation}\label{Borelpgf}
G(z)=z\cdot g(G(z))=z\cdot\exp\big(\lambda(G(z)-1)\big)
\end{equation}
and it is well known that the unique solution of \eqref{Borelpgf} is the pgf $G$ of a Borel distribution. For $\lambda\in(0,1]$ the distribution of the limiting random variable $Y$ is given by the Borel distribution of parameter $\lambda$ with
\begin{equation*}
P\{Y=n\}=\frac{(\lambda n)^{n-1}}{n!}\,e^{-\lambda n}\quad\text{ for }n\in\nat.
\end{equation*}
A common way to show this is using Lagrange's inversion formula, e.g.\ see P\'olya and Szeg\H{o} \cite{PS}, page 145 and Hurwitz and Courant \cite{HC}, page 135 for a proof of Lagrange's formula using tools from complex analysis. We rather prefer to give a direct proof in the Appendix using only probabilistic arguments and elementary combinatorial identities. 

The functional equation \eqref{Borelpgf} for the pgf of a Borel distribution enables to calculate cumulants of the Borel distribution by differentiation; see section 7.2.2 in \cite{JKK}. From the first two cumulants we get the well known expectation and variance of a Borel distribution. For $\lambda\in(0,1)$ we have
\begin{equation}\label{borelmom}
\Exp[Y]=\frac1{1-\lambda}\quad\text{ and }\quad\Var(Y)=\frac{\lambda}{(1-\lambda)^3}
\end{equation}
and for $\lambda=1$ the expectation $\Exp[Y]$ does not exist.\\

Now assume $X_0=m$ for the number of initial insurance claims. Since each of the initial $m$ claims independently generates a Borel distributed number of claims in total, the limit distribution of $Z_n$ as $n\to\infty$ is an $m$-fold convolution of the Borel distribution known as the Borel-Tanner distribution, e.g.\ see \cite{HB}.
For fixed $m\in\nat$ let $Y^{(m)}=Y_1+\cdots+Y_m$, where $Y_1,\ldots,Y_m$ are i.i.d.\ random variables with a Borel distribution of parameter $\lambda\in(0,1]$. Then $Y^{(m)}$ has a Borel-Tanner distribution with
\begin{equation}\label{BorelTandense}
P\{Y^{(m)}=n\}=\frac{m\,(\lambda n)^{n-m}}{n\,(n-m)!}\,e^{-\lambda n}\quad\text{ for }n\in\nat\text{ with }n\geq m.
\end{equation}
We will derive the Borel-Tanner distribution in the Appendix using our elementary approach.
For $\lambda\in(0,1)$ we immediately deduce the expectation and variance of a Borel-Tanner distribution from \eqref{borelmom}
\begin{equation}\label{BorelTanmom}
\Exp[Y^{(m)}]=\frac{m}{1-\lambda}\quad\text{ and }\quad\Var(Y^{(m)})=\frac{m\lambda}{(1-\lambda)^3}.
\end{equation}

\section{Compound distributions with Borel summands}

We will further follow the branching process approach to the Borel distribution given in Section 2 in its actuarial interpretation. Instead of a deterministic number of initial insurance claims, we will now consider $X_0$ to be random with certain claim number distributions widely applied in the actuarial literature.

\subsection{Compound Poisson distribution, the generalized Poisson distribution}

Assume $X_0$ has a Poisson distribution of parameter $\theta>0$, then the limit distribution of $Z_n$ as $n\to\infty$ is a compound Poisson distribution with Borel summands also known as the generalized Poisson distribution (GPD); e.g.\ see section 2.7 in Consul \cite{Con}. Note that by Theorem 2.1 of Joe and Zhu \cite{JZ} or by Theorem 5.1 of Pakes \cite{Pak} it is also possible to represent the GPD as a certain variance mixture of Poisson distributions. 
\begin{theorem}\label{gpd}
Let $(Y_n)_{n\in \nat}$ be an i.i.d.\ sequence with a Borel distribution of parameter $\lambda\in(0,1]$ and let $N$ be an independent random variable with a Poisson distribution of parameter $\theta>0$. Then $Z=\sum_{k=1}^NY_k$ has a GPD with
\begin{equation}\label{gpddens}
P\{Z=n\}=\frac{\theta(\theta+\lambda n)^{n-1}}{n!}\,e^{-(\theta+\lambda n)}\quad\text{ for }n\in\nat_0.
\end{equation}
\end{theorem}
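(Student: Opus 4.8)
The plan is to condition on the value of $N$ and exploit the Borel-Tanner distribution already recorded in \eqref{BorelTandense}. By the law of total probability and the independence of $N$ and $(Y_n)_{n\in\nat}$, I would start from
\begin{equation*}
P\{Z=n\}=\sum_{m=0}^\infty P\{N=m\}\,P\Big\{\textstyle\sum_{k=1}^m Y_k=n\Big\}.
\end{equation*}
For $m\geq 1$ the inner sum is a Borel-Tanner distributed random variable $Y^{(m)}$ whose mass function is given by \eqref{BorelTandense}, while for $m=0$ the empty sum is zero and thus contributes only to the case $n=0$.

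First I would dispose of the case $n=0$: only the term $m=0$ survives, giving $P\{Z=0\}=e^{-\theta}$, which agrees with \eqref{gpddens} at $n=0$ since there the right-hand side equals $\theta\cdot\theta^{-1}\,e^{-\theta}=e^{-\theta}$. For $n\geq 1$ the support condition in \eqref{BorelTandense} forces $m\leq n$, so only the indices $1\leq m\leq n$ contribute. Substituting the Poisson weights together with \eqref{BorelTandense}, cancelling $m/m!=1/(m-1)!$, and pulling out the common factor $\theta\,e^{-\theta}e^{-\lambda n}/n$ reduces the expression to a finite convolution-type sum of the shape $\sum_{m=1}^n \theta^m(\lambda n)^{n-m}/\big((m-1)!\,(n-m)!\big)$.

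The decisive step is then to reindex via $j=m-1$ and recognize the resulting sum as a binomial expansion: after inserting $\binom{n-1}{j}$ it becomes $\sum_{j=0}^{n-1}\binom{n-1}{j}\theta^{j}(\lambda n)^{n-1-j}=(\theta+\lambda n)^{n-1}$ by the binomial theorem. Combining this with the prefactor and using $n\cdot(n-1)!=n!$ yields precisely \eqref{gpddens}.

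I expect no serious obstacle here, since the whole argument amounts to the total probability formula followed by a single application of the binomial theorem. The only points that require care are the correct treatment of the $n=0$ term arising from the empty sum, and the bookkeeping of the summation range $1\leq m\leq n$ dictated by the support of the Borel-Tanner law, together with the cancellation $m/m!=1/(m-1)!$ that makes the binomial structure visible.
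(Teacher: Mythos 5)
Your proposal is correct and follows essentially the same route as the paper's own proof: condition on $N$, insert the Borel--Tanner mass function \eqref{BorelTandense} for $1\leq m\leq n$, and collapse the sum with the binomial theorem after reindexing. The only blemish is a harmless exponent slip in your intermediate display (after extracting the factor $\theta$ the sum should carry $\theta^{m-1}$ rather than $\theta^{m}$), which your subsequent reindexing $j=m-1$ already treats correctly.
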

\begin{proof}
For $n=0$ we get $P\{Z=0\}=P\{N=0\}=e^{-\theta}$ in agreement with \eqref{gpddens} and for $n\in\nat$ we obtain from \eqref{BorelTandense}
\begin{align*}
P\{Z=n\} & =P\left\{\sum_{k=1}^NY_k=n\right\}=\sum_{m=1}^\infty P\left(\sum_{k=1}^mY_k=n\bigg| N=m\right)\cdot P\{N=m\}\\
& =\sum_{m=1}^n\frac{\theta^m}{m!}\,e^{-\theta}P\{Y^{(m)}=n\}=\sum_{m=1}^n\frac{\theta^m}{m!}\,e^{-\theta}\,\frac{m(\lambda n)^{n-m}}{n\,(n-m)!}\,e^{-\lambda n}\\
& =\frac{\theta}{n!}\left(\sum_{m=1}^n\frac{(n-1)!}{(m-1)!\,(n-m)!}\,\theta^{m-1}(\lambda n)^{n-m}\right)e^{-(\theta+\lambda n)}\\
& =\frac{\theta}{n!}\left(\sum_{m=0}^{n-1}{n-1\choose m}\,\theta^{m}(\lambda n)^{n-1-m}\right)e^{-(\theta+\lambda n)}\\
& =\frac{\theta(\theta+\lambda n)^{n-1}}{n!}\,e^{-(\theta+\lambda n)}
\end{align*}
concluding the proof.
\end{proof}
We can directly calculate the expectation and variance of a GPD from \eqref{borelmom} and Wald's identities. For $\lambda=1$ the expectation $\Exp[Z]$ does not exist and for $\lambda\in(0,1)$ we have
\begin{align*}
& \Exp[Z]=\Exp[N]\cdot\Exp[Y_1]=\frac{\theta}{1-\lambda}
\intertext{and}
& \Var(Z)=\Exp[N]\cdot\Var(Y_1)+\Var(N)\cdot\Exp[Y_1]^2=\frac{\theta\lambda}{(1-\lambda)^3}+\frac{\theta}{(1-\lambda)^2}=\frac{\theta}{(1-\lambda)^3}.
\end{align*}
\begin{remark}
It is possible to extend the parameters of a GPD. For $\lambda=0$ we see from \eqref{gpddens} that $Z$ has a Poisson distribution of parameter $\theta$, since the Borel distributed random variables fulfill $P\{Y_k=1\}=1$. In case $\theta=0$ we get $P\{Z=0\}=1$. Lerner, Lone and Rao \cite{LLR} have shown that \eqref{gpddens} defines a sub-probability measure for $\lambda>1$ and calculated its total mass. Hence an appropriate normalization again leads to a probability measure. This reflects the fact that in case $\lambda>1$ the Galton-Watson branching process does not stop with positive probability. Consul and Jain \cite{CJ} state that also in case $\lambda\in(-1,0)$ a distribution is defined by truncating \eqref{gpddens} only to those $n\in\nat_0$ for which $\theta+\lambda n>0$. This appears not to be true as can easily be derived for the case $\theta=1$, $\lambda=-1/2$ but, of course, again an appropriate normalization leads to a probability distribution. However, for our considerations negative values of $\lambda$ are of no interest.
\end{remark}
For parameters $\theta>0$ and $\lambda\in(0,1)$ let $p(\theta,\lambda;n)=P\{Z=n\}$, $n\in\nat_0$, be the GPD given by \eqref{gpddens}. It is easy to see that the GPD fulfills the recursive relation
\begin{equation}\label{gpdrecur}
p(\theta,\lambda;n)=\frac{\theta}{\theta+\lambda}\left(\lambda+\frac{\theta}{n}\right)\,p(\theta+\lambda,\lambda;n-1)\quad\text{ for }n\in\nat,
\end{equation}
cf.\ also equation (4.4) in \cite{AB}. The GPD in \eqref{gpddens} coincides with the distribution $p_{S\!D}$ in \eqref{SD}. We will now show that \eqref{SU} is a compound Bartlett distribution with Borel summands.

\subsection{Compound Bartlett distribution}

To see that \eqref{SU} in fact defines a probability distribution, we obtain using \eqref{SD} and the above expectation of a GPD
\begin{align*}
\sum_{n=0}^\infty p_{SU}(n) & =\sum_{n=0}^\infty\frac{(1-\lambda)(\theta+\lambda n)}{\theta}\,p_{S\!D}(n)\\
& =(1-\lambda)\sum_{n=0}^\infty p_{S\!D}(n)+\frac{\lambda(1-\lambda)}{\theta}\sum_{n=0}^\infty n\cdot p_{S\!D}(n)\\
& =1-\lambda+\frac{\lambda(1-\lambda)}{\theta}\,\frac{\theta}{1-\lambda}=1.
\end{align*}
Now let $M$ be a geometrically distributed random variable on $\nat_0$ with $P\{M=n\}=\lambda^n(1-\lambda)$ for $n\in\nat_0$ and some parameter $\lambda\in(0,1)$. Then $M$ has pgf $H(z)=\frac{1-\lambda}{1-\lambda z}$, expectation $\frac{\lambda}{1-\lambda}$ and variance $\frac{\lambda}{(1-\lambda)^2}$. The distribution of the sum of $M$ with an independent Poisson random variable of parameter $\theta>0$ is known as the Bartlett distribution due to its appearance in \cite{Bar}. The probabilities of a Bartlett distributed random variable $N$ can only be given in form of a convolution
\begin{equation}\label{Barconv}
P\{N=n\}=\sum_{k=0}^n P\{M=n-k\}\,\frac{\theta^{k}}{k!}\,e^{-\theta}=(1-\lambda)\lambda^n e^{-\theta}\sum_{k=0}^n \frac1{k!}\,\left(\frac{\theta}{\lambda}\right)^k
\end{equation}
for $n\in\nat_0$. We will now determine the distribution of a compound Bartlett distribution with Borel summands of the same parameter $\lambda\in(0,1)$.
\begin{theorem}\label{Barmix}
Let $(Y_n)_{n\in \nat}$ be an i.i.d.\ sequence with a Borel distribution of parameter $\lambda\in(0,1)$ and let $N$ be an independent random variable with a Bartlett distribution given by \eqref{Barconv} with additional parameter $\theta\geq0$. Then $Z=\sum_{k=1}^NY_k$ has the distribution given by \eqref{SU}
\begin{equation}\label{Barmixdens}
P\{Z=n\}=\frac{(1-\lambda)(\theta+\lambda n)^{n}}{n!}\,e^{-(\theta+\lambda n)}\quad\text{ for }n\in\nat_0.
\end{equation}
\end{theorem}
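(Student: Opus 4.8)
The plan is to follow the same conditioning strategy that succeeded for Theorem~\ref{gpd}, decomposing the Bartlett mixing variable $N$ according to its defining convolution structure. Since $N$ is the sum of a geometric random variable $M$ and an independent Poisson random variable, I would first write $P\{Z=n\}$ by conditioning on $N=m$ and inserting the Borel--Tanner probabilities $P\{Y^{(m)}=n\}$ from \eqref{BorelTandense}. Concretely, for $n\in\nat$ this yields
\begin{equation*}
P\{Z=n\}=\sum_{m=1}^n P\{N=m\}\,\frac{m(\lambda n)^{n-m}}{n\,(n-m)!}\,e^{-\lambda n},
\end{equation*}
and the case $n=0$ should again reduce to $P\{N=0\}=(1-\lambda)e^{-\theta}$, matching \eqref{Barmixdens}. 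The goal is then to evaluate the resulting double sum (once $P\{N=m\}$ is expanded via \eqref{Barconv}) into the closed form $\frac{(1-\lambda)(\theta+\lambda n)^n}{n!}e^{-(\theta+\lambda n)}$.

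The main structural idea I would exploit is that \eqref{SU} has already been shown, in the preamble to this theorem, to satisfy $p_{SU}(n)=\frac{(1-\lambda)(\theta+\lambda n)}{\theta}\,p_{S\!D}(n)$, linking the target density directly to the GPD density $p_{S\!D}$ from Theorem~\ref{gpd}. This suggests an alternative and probably cleaner route: rather than grinding through the Bartlett convolution directly, I would try to reduce the Bartlett case to the already-established Poisson case. Since a Bartlett variable is a Poisson variable shifted by an independent geometric amount, conditioning on the geometric part $M=j$ turns the inner Poisson-plus-Borel-Tanner computation into a translate of the GPD calculation, with the Poisson parameter effectively adjusted. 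I would set up the sum over $j$ (the geometric value) and $m$ (the Poisson value) with $N=j+m$, substitute \eqref{BorelTandense}, and aim to recognize the inner sum over $m$ as a binomial-type expansion producing a power of $(\theta+\lambda n)$, exactly as the telescoping binomial sum $\sum_m\binom{n-1}{m}\theta^m(\lambda n)^{n-1-m}=(\theta+\lambda n)^{n-1}$ did in the proof of Theorem~\ref{gpd}.

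The hard part will be the combinatorial bookkeeping in the double summation: after inserting \eqref{Barconv} one faces a sum $\sum_{m=1}^n\bigl(\sum_{k=0}^m \frac{\theta^k}{k!}\lambda^{-k}\bigr)\lambda^m\,\frac{m(\lambda n)^{n-m}}{n(n-m)!}$ (up to the common factors $(1-\lambda)e^{-(\theta+\lambda n)}$), and the challenge is reorganizing the order of summation so that the powers of $\lambda$ and $\theta$ recombine into $(\theta+\lambda n)^n$. I expect the key manipulation to be an interchange of the $k$- and $m$-summations followed by an application of a multinomial Abel-type identity of the kind advertised in the keywords, generalizing the plain binomial theorem used before; the presence of the extra geometric factor $\lambda^m$ is precisely what raises the exponent from $n-1$ to $n$ and supplies the $(1-\lambda)$ prefactor. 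An alternative safeguard, should the direct Abel identity prove unwieldy, is to verify \eqref{Barmixdens} through the probability generating function: the pgf of $Z$ is the composition $H(G(z))\cdot\exp\!\big(\theta(G(z)-1)\big)$ where $G$ is the Borel pgf solving \eqref{Borelpgf} and $H$ is the geometric pgf, and one can check this agrees with the pgf obtained from \eqref{SU} via the functional equation \eqref{Borelpgf}. I would keep the generating-function argument in reserve but attempt the direct combinatorial evaluation first, since it parallels the elementary approach the authors emphasize throughout.
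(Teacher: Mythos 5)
Your fallback route is precisely the paper's proof: the authors compute
$G_{SU}(z)=(1-\lambda)G_{S\!D}(z)+\frac{\lambda(1-\lambda)z}{\theta}G_{S\!D}'(z)$, use
$G'(z)=G(z)/\big(z(1-\lambda G(z))\big)$ from \eqref{Borelpgf} to collapse this to
$\frac{1-\lambda}{1-\lambda G(z)}\exp(\theta(G(z)-1))=H(G(z))\exp(\theta(G(z)-1))$, and then dispose of $\theta=0$ by a weak-limit argument (the identity $p_{SU}(n)=\frac{(1-\lambda)(\theta+\lambda n)}{\theta}p_{S\!D}(n)$ that you quote needs $\theta>0$). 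Your primary route --- conditioning on $N$, inserting \eqref{BorelTandense} and \eqref{Barconv}, and interchanging the $k$- and $m$-sums --- is genuinely different from the paper and does go through, in fact more easily than you anticipate: no multinomial Abel identity is required. Matching the coefficient of $\theta^k/k!$ in $(\theta+\lambda n)^n/n!$ reduces the problem to
\begin{equation*}
\sum_{m=\max(k,1)}^{n}\lambda^{m-k}\,\frac{m\,(\lambda n)^{n-m}}{n\,(n-m)!}=\frac{(\lambda n)^{n-k}}{(n-k)!},
\end{equation*}
and after factoring out $\lambda^{n-k}$ and substituting $i=n-m$ the left-hand side becomes $\lambda^{n-k}\sum_{i}\frac{(n-i)n^{i-1}}{i!}$, which telescopes because $\frac{(n-i)n^{i-1}}{i!}=\frac{n^{i}}{i!}-\frac{n^{i-1}}{(i-1)!}$. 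This elementary route has the added merit of covering $\theta=0$ uniformly, with no separate limit. The one caveat is that your proposal is a plan rather than a proof: the key identity above is only conjectured (and mislabelled as Abel-type), so to be complete you must either carry out the telescoping verification or execute the reserve pgf argument, which would then reproduce the paper's proof essentially verbatim.
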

\begin{proof}
We first consider $\theta>0$. Let $G$ be the pgf of a Borel distribution and $H$ be the pgf of a geometric distribution both with the same parameter $\lambda\in(0,1)$. Further let $G_{S\!D}$ and $G_{SU}$ denote the pgf's of the distributions given by \eqref{SD} and \eqref{SU}, respectively. Since \eqref{SD} defines a GPD we know that $G_{S\!D}(z)=\exp(\theta(G(z)-1))$ and hence by differentiation we get 
$$G_{S\!D}'(z)=\theta G'(z)G_{S\!D}(z).$$
We further obtain $z=G(z)\exp(-\lambda(G(z)-1))$ from \eqref{Borelpgf} which by differentiation yields $1=G'(z)(1-\lambda G(z))\exp(-\lambda(G(z)-1))$ or
\begin{equation}\label{Gprime}
G'(z)=\frac{\exp(\lambda(G(z)-1))}{1-\lambda G(z)}=\frac{G(z)}{z(1-\lambda G(z))}.
\end{equation}
Since $p_{SU}(n)=(1-\lambda)p_{S\!D}(n)+\lambda(1-\lambda)\theta^{-1}\,n\, p_{S\!D}(n)$, the above derivatives lead to
\begin{align*}
G_{SU}(z) & =(1-\lambda)\sum_{n=0}^\infty z^n p_{S\!D}(n)+\frac{\lambda(1-\lambda)z}{\theta}\sum_{n=0}^\infty n\,z^{n-1} p_{S\!D}(n)\\
& =(1-\lambda)G_{S\!D}(z)+\frac{\lambda(1-\lambda)z}{\theta}\,G_{S\!D}'(z)\\
& =(1-\lambda)\big(1+\lambda z G'(z)\big)G_{S\!D}(z)=(1-\lambda)\left(1+\frac{\lambda G(z)}{1-\lambda G(z)}\right)G_{S\!D}(z)\\
& = \frac{1-\lambda}{1-\lambda G(z)}\,G_{S\!D}(z)=H(G(z))\exp(\theta(G(z)-1)),
\end{align*}
which is the pgf of the proposed compound Bartlett distribution with Borel summands.

For $\theta=0$ note that the distribution of $Z$ is a compound geometric distribution with Borel summands and is the weak limit as $\theta\downarrow0$ of the above compound Bartlett distribution. Hence \eqref{Barmixdens} is also valid in case $\theta=0$.
\end{proof}
We can again directly calculate the expectation and variance of the compound Bartlett distribution from \eqref{borelmom} and Wald's identities
\begin{align*}
& \Exp[Z]=\Exp[N]\cdot\Exp[Y_1]=\left(\theta+\frac{\lambda}{1-\lambda}\right)\frac1{1-\lambda}=\frac{\theta}{1-\lambda}+\frac{\lambda}{(1-\lambda)^2}
\intertext{and}
& \Var(Z)=\Exp[N]\cdot\Var(Y_1)+\Var(N)\cdot\Exp[Y_1]^2=\frac{\theta}{(1-\lambda)^3}+\frac{\lambda^2+\lambda}{(1-\lambda)^4}.
\end{align*}
For parameters $\theta\geq0$ and $\lambda\in(0,1)$ let $p(\theta,\lambda;n)=P\{Z=n\}$, $n\in\nat_0$, be the compound Bartlett distribution given by \eqref{Barmixdens}, which coincides with \eqref{SU}. Again, it is easy to see that the compound Bartlett distributions fulfill the recursive relation
\begin{equation}\label{Barmixrecur}
p(\theta,\lambda;n)=\left(\lambda+\frac{\theta}{n}\right)\,p(\theta+\lambda,\lambda;n-1)\quad\text{ for }n\in\nat.
\end{equation}

\subsection{Compound Delaporte distribution}

Let $M^{(m)}$ be a random variable with a negative binomial distribution $P\{M^{(m)}=n\}={n+m-1\choose n}\lambda^n(1-\lambda)^m$ for $\lambda\in(0,1)$ and $m\in\nat$. Since the negative binomial distribution is an $m$-fold convolution of the geometric distribution, $M^{(m)}$ has pgf $H^m(z)=\left(\frac{1-\lambda}{1-\lambda z}\right)^m$, expectation $\frac{m\lambda}{1-\lambda}$ and variance $\frac{m\lambda}{(1-\lambda)^2}$. The distribution of the sum of $M^{(m)}$ with an independent Poisson random variable of parameter $\theta>0$ is known as the Delaporte distribution although its first appearance goes back to L\"uders \cite{Lue}; see section 5.12.5 in \cite{JKK}. The probabilities of a Delaporte distributed random variable $N$ can only be given in form of a convolution
\begin{equation}\label{Delconv}\begin{split}
P\{N=n\} &=\sum_{k=0}^n P\{M^{(m)}=k\}\,\frac{\theta^{n-k}}{(n-k)!}\,e^{-\theta}\\
& =\sum_{k=0}^n {k+m-1\choose k}\lambda^k(1-\lambda)^m\,\frac{\theta^{n-k}}{(n-k)!}\,e^{-\theta}\\
& =\frac{(1-\lambda)^me^{-\theta}}{(m-1)!\,n!}\sum_{k=0}^n {n\choose k} (k+m-1)!\,\lambda^k\theta^{n-k}
\end{split}\end{equation}
for $n\in\nat_0$. We will now determine the distribution of a compound Delapote distribution with Borel summands of  the same parameter $\lambda\in(0,1)$. This distribution generalizes an $\alpha$-modified Poisson distribution of type $m-1$ given by Chakraborty \cite{Cha}; see also Steliga and Szynal \cite{SS}.
\begin{theorem}\label{Delmix}
Let $(Y_n)_{n\in \nat}$ be an i.i.d.\ sequence with a Borel distribution of parameter $\lambda\in(0,1)$ and let $N$ be an independent random variable with a Delaporte distribution given by \eqref{Delconv} with additional parameters $\theta\geq0$ and $m\geq2$. Then $Z^{(m)}=\sum_{k=1}^NY_k$ has the distribution 
\begin{equation}\label{Delmixdens}
P\{Z^{(m)}=n\}=\frac{(1-\lambda)^m(\theta+\lambda n+\lambda\,\alpha(m-1))^{n}}{n!}\,e^{-(\theta+\lambda n)}\quad\text{ for }n\in\nat_0,
\end{equation}
where we use Riordan's \cite{Rio} $\alpha$-symbols defined by $\alpha^\ell(m-1)={m+\ell-2\choose \ell}\ell!$ when applying the binomial formula to the factor $(\theta+\lambda n+\lambda\,\alpha(m-1))^{n}$ in \eqref{Delmixdens} for $\ell=0,\ldots, n$.
\end{theorem}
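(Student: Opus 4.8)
The plan is to reduce the statement to a single multinomial Abel identity by decomposing the Delaporte mixing variable probabilistically. Since the Delaporte law in \eqref{Delconv} is the convolution of a Poisson distribution of parameter $\theta$ with a negative binomial law $M^{(m)}$, and since $M^{(m)}$ is itself an $m$-fold convolution of the geometric law of parameter $\lambda$, the compound sum splits as an independent sum
$$Z^{(m)}\eqd Z_0+\tilde Z_1+\cdots+\tilde Z_m,$$
where $Z_0$ is the GPD of Theorem \ref{gpd} (coming from the Poisson part) and each $\tilde Z_i$ is an independent compound geometric sum with Borel summands, i.e.\ the case $\theta=0$ of Theorem \ref{Barmix}, with density $\tilde p(n)=\frac{(1-\lambda)(\lambda n)^n}{n!}e^{-\lambda n}$ read off from \eqref{Barmixdens}. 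This reduces \eqref{Delmixdens} to the evaluation of an $(m+1)$-fold convolution of explicitly known densities, all sharing the Borel parameter $\lambda$.

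First I would dispose of $n=0$, where only the all-zero index contributes and both sides equal $(1-\lambda)^m e^{-\theta}$, and record that the case $m=1$ is exactly the Bartlett distribution of Theorem \ref{Barmix}, for which the $\alpha$-symbol expansion collapses to its $\ell=0$ term. For $n\in\nat$ the convolution of $p_{S\!D}$ with $m$ copies of $\tilde p$ reads, after merging the exponentials into $e^{-(\theta+\lambda n)}$ and combining the factorials $1/(n_0!\cdots n_m!)$ into a multinomial coefficient,
$$P\{Z^{(m)}=n\}=\frac{(1-\lambda)^m}{n!}\,e^{-(\theta+\lambda n)}\sum_{n_0+\cdots+n_m=n}\binom{n}{n_0,\ldots,n_m}\,\theta(\theta+\lambda n_0)^{n_0-1}\prod_{i=1}^m(\lambda n_i)^{n_i},$$
so that the theorem is equivalent to the identity
$$\sum_{n_0+\cdots+n_m=n}\binom{n}{n_0,\ldots,n_m}\,\theta(\theta+\lambda n_0)^{n_0-1}\prod_{i=1}^m(\lambda n_i)^{n_i}=(\theta+\lambda n+\lambda\,\alpha(m-1))^n,$$
the right-hand side being expanded by the binomial theorem with $\alpha^\ell(m-1)=\binom{m+\ell-2}{\ell}\ell!$ as prescribed. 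The internal check $m=1$ returns the pure Abel identity $\sum_{j}\binom{n}{j}\theta(\theta+\lambda j)^{j-1}(\lambda(n-j))^{n-j}=(\theta+\lambda n)^n$, consistent with Theorem \ref{Barmix}.

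The main obstacle is this last combinatorial identity, a multinomial Abel identity in the spirit of Riordan \cite{Rio}. Its shape is dictated by Abel's factor $\theta(\theta+\lambda n_0)^{n_0-1}$ attached to the Poisson (GPD) block and the pure powers $(\lambda n_i)^{n_i}$ attached to the $m$ geometric blocks, which are the $a=0$ specializations of the generic Abel factor $(a+kz)^{k}$. I expect to establish it by a careful accounting of the $\alpha$-symbols: expanding $(\theta+\lambda n+\lambda\alpha(m-1))^n=\sum_{\ell=0}^n\binom{n}{\ell}(\theta+\lambda n)^{n-\ell}\lambda^\ell\,\alpha^\ell(m-1)$ and matching, for each fixed $\ell$, the coefficient $\alpha^\ell(m-1)=(m-1)(m)\cdots(m+\ell-2)$ against the contribution of those index vectors that place a surplus of total size $\ell$ among the $m$ geometric components; the rising-factorial meaning of the $\alpha$-symbol is precisely what bookkeeps this distribution.

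As an alternative route that isolates a single, non-multinomial Abel step, I would induct on $m$: the decomposition above gives $Z^{(m)}\eqd Z^{(m-1)}+\tilde Z_m$, so that $P\{Z^{(m)}=n\}=\sum_{i=0}^n P\{Z^{(m-1)}=i\}\,\tilde p(n-i)$, and the inductive step reduces to the binomial Abel identity
$$(\theta+\lambda n+\lambda\,\alpha(m-1))^n=\sum_{i=0}^n\binom{n}{i}(\theta+\lambda i+\lambda\,\alpha(m-2))^i\,(\lambda(n-i))^{n-i},$$
again read umbrally via the $\alpha$-symbols. In either route the case $\theta=0$, a compound negative binomial with Borel summands, then follows as the weak limit $\theta\downarrow0$, exactly as in the proof of Theorem \ref{Barmix}.
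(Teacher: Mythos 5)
Your decomposition is genuinely different from the paper's. The paper writes the pgf of $Z^{(m)}$ as $\big(H(G(z))\exp(\tfrac{\theta}{m}(G(z)-1))\big)^m$, i.e.\ it splits the Poisson parameter into $m$ equal parts $\theta/m$ and represents $Z^{(m)}$ as an $m$-fold convolution of \emph{i.i.d.} compound Bartlett variables; the resulting convolution sum is then exactly the symmetric multinomial Abel sum $A_n\big(\tfrac{\theta}{m\lambda},\ldots,\tfrac{\theta}{m\lambda},0,\ldots,0\big)$ whose closed form $\big(\tfrac{\theta}{\lambda}+n+\alpha(m-1)\big)^n$ is Hurwitz's identity as quoted on page 25 of \cite{Rio}, so the paper can simply cite it. You instead keep the Poisson block whole and peel off the $m$ geometric blocks, arriving at an $(m+1)$-fold convolution and the mixed identity with one Abel-type factor $\theta(\theta+\lambda n_0)^{n_0-1}$ and $m$ factors $(\lambda n_i)^{n_i}$. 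Your reduction is clean, your identity is in fact true (it is of Hurwitz type; small cases check out), and your treatment of $n=0$ and of the limit $\theta\downarrow0$ matches the paper's.

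The gap is that this identity \emph{is} the theorem, and you do not prove it: ``I expect to establish it by a careful accounting of the $\alpha$-symbols'' is a plan, not an argument, and unlike the paper's symmetric sum your mixed sum is not the form quoted in \cite{Rio}, so it cannot simply be cited; the umbral binomial Abel identity in your inductive alternative has the same status. The cleanest way to close the gap along your route is to perform two explicit steps: first sum over $n_1+\cdots+n_m=n-j$ with the zero-offset Hurwitz identity to obtain $\lambda^{n-j}\big((n-j)+\alpha(m-1)\big)^{n-j}$; then expand the $\alpha$-symbol, interchange summations via $\binom{n}{j}\binom{n-j}{\ell}=\binom{n}{\ell}\binom{n-\ell}{j}$, and apply the classical binomial Abel identity $\sum_{j}\binom{N}{j}x(x+j)^{j-1}(y+N-j)^{N-j}=(x+y+N)^N$ with $N=n-\ell$ and $y=\ell$ for each fixed $\ell$. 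With that supplement your argument is complete and constitutes a legitimate alternative to the paper's proof.
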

\begin{proof}
We first consider $\theta>0$. Let $G$ be the pgf of a Borel distribution and $H$ be the pgf of a geometric distribution both with the same parameter $\lambda\in(0,1)$. Then the proposed compound Delaporte distribution has pgf 
$$H^m(G(z))\exp(\theta(G(z)-1))=\Big(H(G(z))\exp\left(\tfrac{\theta}{m}(G(z)-1)\right)\Big)^m$$
and thus is an $m$-fold convolution of the compound Bartlett distribution in Section 3.2, where the parameter $\theta$ has to be replaced by $\theta/m$. Hence write $Z^{(m)}=\sum_{k=1}^mZ_k$, where $Z_1,\ldots,Z_m$ are i.i.d.\ with probability mass function
$$P\{Z_1=n\}=\frac{(1-\lambda)\big(\frac{\theta}{m}+\lambda n\big)^{n}}{n!}\,e^{-\left(\frac{\theta}{m}+\lambda n\right)}.$$
For the compound Delaporte distribution we obtain
\begin{align*}
P\{Z^{(m)}=n\} & =P\left\{\sum_{k=1}^mZ_k=n\right\}=\sum_{(n_1,\ldots,n_m)\in\nat_0^m\atop n_1+\cdots+n_m=n}\prod_{k=1}^mP\{Z_1=n_k\}\\
& =\sum_{(n_1,\ldots,n_m)\in\nat_0^m\atop n_1+\cdots+n_m=n}\prod_{k=1}^m\frac{(1-\lambda)\big(\frac{\theta}{m}+\lambda n_k\big)^{n_k}}{n_k!}\,e^{-\left(\frac{\theta}{m}+\lambda n_k\right)}\\
& =\frac{(1-\lambda)^m \lambda^n e^{-(\theta+\lambda n)}}{n!}\sum_{(n_1,\ldots,n_m)\in\nat_0^m\atop n_1+\cdots+n_m=n}\frac{n!}{n_1!\cdots n_m!}\prod_{k=1}^m\left(\frac{\theta}{m\lambda}+n_k\right)^{n_k}.
\end{align*}
The sum on the right-hand side is a multinomial Abel sum $A_{n}\big(\frac{\theta}{m\lambda},\ldots,\frac{\theta}{m\lambda},0,\ldots,0\big)$ with $m$ factors as defined in section 1.6 of \cite{Rio}. Its closed-form solution due to Hurwitz is given by 
$$\sum_{(n_1,\ldots,n_m)\in\nat_0^m\atop n_1+\cdots+n_m=n}\frac{n!}{n_1!\cdots n_m!}\prod_{k=1}^m\left(\frac{\theta}{m\lambda}+n_k\right)^{n_k}=\left(\frac{\theta}{\lambda}+n+\alpha(m-1)\right)^n$$
with $\alpha(m-1)$ as in the statement of Theorem \ref{Delmix}; cf.\ \cite{Rio}, page 25.

For $\theta=0$ note that the distribution of $Z$ is a compound negative binomial distribution with Borel summands and is the weak limit as $\theta\downarrow0$ of the above compound Delaporte distribution. Hence \eqref{Delmixdens} is also valid in case $\theta=0$.
\end{proof}
We can again directly calculate the expectation and variance of the compound Delaporte distribution from \eqref{borelmom} and Wald's identities
\begin{align*}
& \Exp[Z^{(m)}]=\Exp[N]\cdot\Exp[Y_1]=\left(\theta+\frac{m\lambda}{1-\lambda}\right)\frac1{1-\lambda}=\frac{\theta}{1-\lambda}+\frac{m\lambda}{(1-\lambda)^2}
\intertext{and}
& \Var(Z^{(m)})=\Exp[N]\cdot\Var(Y_1)+\Var(N)\cdot\Exp[Y_1]^2=\frac{\theta}{(1-\lambda)^3}+\frac{m(\lambda^2+\lambda)}{(1-\lambda)^4}.
\end{align*}
\begin{lemma} \label{Lem:Delmixrec}
For $\theta\geq0$, $\lambda\in(0,1)$ and $m\geq2$ let $p(\theta,\lambda,m;n)=P\{Z^{(m)}=n\}$, $n\in\nat_0$, be the compound Delaporte distribution given by \eqref{Delmixdens}. Then for every $n\in\nat$ the compound Delaporte distributions fulfill the recursive relation
\begin{equation}\label{Delmixrecur}
p(\theta,\lambda,m;n)=\frac{\lambda (m-1)}{(1-\lambda)n}\,p(\theta+\lambda,\lambda,m+1;n-1)+\frac{\theta + \lambda n}{n}\,p(\theta+\lambda,\lambda,m;n-1).
\end{equation}
\end{lemma}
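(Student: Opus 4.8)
The plan is to verify \eqref{Delmixrecur} directly by substituting the closed form \eqref{Delmixdens} into all three terms and reducing to an elementary combinatorial identity. The decisive preliminary observation is that the parameter shift $\theta\mapsto\theta+\lambda$, $n\mapsto n-1$ occurring on the right-hand side leaves the combination $\theta+\lambda n$ invariant, since $(\theta+\lambda)+\lambda(n-1)=\theta+\lambda n$. Consequently all three probabilities carry the same exponential factor $e^{-(\theta+\lambda n)}$, which will cancel throughout. Abbreviating $s=\theta+\lambda n$ and expanding the polynomial factor by the binomial formula with Riordan's $\alpha$-symbols, I would set
\[
Q(s,m,n)=\sum_{\ell=0}^{n}\binom{n}{\ell}s^{n-\ell}\lambda^{\ell}\,\alpha^{\ell}(m-1),
\]
so that $p(\theta,\lambda,m;n)=\frac{(1-\lambda)^{m}}{n!}\,e^{-s}\,Q(s,m,n)$, and likewise $p(\theta+\lambda,\lambda,m+1;n-1)=\frac{(1-\lambda)^{m+1}}{(n-1)!}\,e^{-s}\,Q(s,m+1,n-1)$ and $p(\theta+\lambda,\lambda,m;n-1)=\frac{(1-\lambda)^{m}}{(n-1)!}\,e^{-s}\,Q(s,m,n-1)$, with $s$ held fixed in each.

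After clearing the common factor $(1-\lambda)^{m}e^{-s}/n!$, using $n\cdot(n-1)!=n!$ and $(1-\lambda)^{m+1}/(1-\lambda)=(1-\lambda)^{m}$, and noting that the coefficient $\theta+\lambda n$ in the second term of \eqref{Delmixrecur} is precisely $s$, the recursion collapses to the purely algebraic identity
\[
Q(s,m,n)=\lambda(m-1)\,Q(s,m+1,n-1)+s\,Q(s,m,n-1).
\]
This reduction is where the invariance of $\theta+\lambda n$ is used, and it is the only nonroutine bookkeeping step.

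To establish this identity I would expand both summands on the right and match coefficients of $s^{n-j}\lambda^{j}\,\alpha^{j}(m-1)$. The combinatorial input is the shifting relation $(m-1)\,\alpha^{\ell}(m)=\alpha^{\ell+1}(m-1)$, which follows at once from $\alpha^{\ell}(m-1)=\frac{(m+\ell-2)!}{(m-2)!}$. Applying it to the first term (whose $\alpha$-symbol is $\alpha^{\ell}(m)$) and reindexing $\ell\mapsto\ell+1$ turns $\lambda(m-1)\,Q(s,m+1,n-1)$ into $\sum_{j=1}^{n}\binom{n-1}{j-1}s^{n-j}\lambda^{j}\alpha^{j}(m-1)$, while $s\,Q(s,m,n-1)$ becomes $\sum_{j=0}^{n-1}\binom{n-1}{j}s^{n-j}\lambda^{j}\alpha^{j}(m-1)$. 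Adding the two and invoking Pascal's rule $\binom{n-1}{j-1}+\binom{n-1}{j}=\binom{n}{j}$ for $1\le j\le n-1$, with the boundary cases $j=0$ and $j=n$ handled directly, reproduces $Q(s,m,n)=\sum_{j=0}^{n}\binom{n}{j}s^{n-j}\lambda^{j}\alpha^{j}(m-1)$, which proves the identity and hence the lemma.

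I do not expect a genuine obstacle here: once the exponentials are cancelled and the problem is cast in terms of $Q$, the computation is mechanical. The one point that requires care is correctly tracking the $\alpha$-symbols under the \emph{simultaneous} increment of $m$ and decrement of $n$, since it is exactly the relation $(m-1)\,\alpha^{\ell}(m)=\alpha^{\ell+1}(m-1)$ that converts the $m\mapsto m+1$ shift in the first summand into a reindexing of the exponent and thereby aligns the two sums so that Pascal's rule closes the argument.
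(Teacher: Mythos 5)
Your proposal is correct and follows essentially the same route as the paper's proof: both rest on the binomial expansion of $(\theta+\lambda n+\lambda\,\alpha(m-1))^{n}$ in $\alpha$-symbols, the shifting relation $(m-1)\,\alpha^{\ell}(m)=\alpha^{\ell+1}(m-1)$, Pascal's rule, and the invariance of $\theta+\lambda n$ under the shift $\theta\mapsto\theta+\lambda$, $n\mapsto n-1$. The only cosmetic difference is that the paper works forward from $p(\theta,\lambda,m;n)$ by splitting the binomial coefficient, whereas you verify the equivalent polynomial identity by assembling the right-hand side.
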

\begin{proof}
First note that
\begin{align*}
\alpha^{n-k}(m-1) &= {m+n-k-2 \choose n-k} (n-k)! = \frac{(m+n-k-2)!}{(n-k)! (m-2)!} (n-k)! \\
&= (m-1) \frac{(m+1 + (n-1-k) -2)!}{(n-1-k)!(m+1-2)!} (n-1-k)! \\
&= (m-1) \alpha^{n-1-k}(m)
\end{align*}
for $k = 0, \ldots, n-1$.
Now, from Theorem \ref{Delmix} we obtain
\begin{align*}
& p(\theta,\lambda,m;n)=\frac{(1-\lambda)^m(\theta+\lambda n+\lambda\,\alpha(m-1))^{n}}{n!}\,e^{-(\theta+\lambda n)}\\ 
& \quad=\frac{(1-\lambda)^m}{n!}\,e^{-(\theta+\lambda n)}\sum_{k=0}^n{n\choose k}(\theta+\lambda n)^k\lambda^{n-k}\alpha^{n-k}(m-1)\\
& \quad=\frac{(1-\lambda)^m}{n!}\,e^{-(\theta+\lambda n)}\left(\sum_{k=0}^{n-1}{n-1\choose k}(\theta+\lambda n)^k\lambda^{n-k}\alpha^{n-k}(m-1)\right.\\
& \qquad\phantom{=\frac{(1-\lambda)^m}{n!}\,e^{-(\theta+\lambda n)}}\left.+\sum_{k=1}^n{n-1\choose k-1}(\theta+\lambda n)^k\lambda^{n-k}\alpha^{n-k}(m-1)\right)\\
& \quad=\frac{(1-\lambda)^m}{n!}\,e^{-(\theta+\lambda n)}\left((m-1)\lambda\sum_{k=0}^{n-1}{n-1\choose k}(\theta+\lambda n)^k\lambda^{n-1-k}\alpha^{n-1-k}(m)\right.\\
& \qquad\phantom{=\frac{(1-\lambda)^m}{n!}\,e^{-(\theta+\lambda n)}}\left.+(\theta+\lambda n)\sum_{k=0}^{n-1}{n-1\choose k}(\theta+\lambda n)^k\lambda^{n-1-k}\alpha^{n-1-k}(m-1)\right)\\
& \quad=\frac{(1-\lambda)^m}{n\,(n-1)!}\,e^{-(\theta+\lambda n)}\left((m-1)\lambda\cdot(\theta+\lambda n+\lambda\,\alpha(m))^{n-1} \right. \\
& \quad\phantom{=\frac{(1-\lambda)^m}{n\,(n-1)!}\,e^{-(\theta+\lambda n)}} \left. +(\theta+\lambda n)(\theta+\lambda n+\lambda\,\alpha(m-1))^{n-1}\right)\\
& \quad=\frac{\lambda (m-1)}{(1-\lambda)n}\,p(\theta+\lambda,\lambda,m+1;n-1)+\frac{\theta+\lambda n}{n}\,p(\theta+\lambda,\lambda,m;n-1),
\end{align*}
which proves the assertion.
\end{proof}

\subsection{Compound shifted Delaporte mixtures}

We will now consider the following natural generalization of the GPD \eqref{SD} and the compound Bartlett distribution \eqref{SU}
\begin{equation}\label{rsDm}
p_k(\theta,\lambda;n)=\frac{1}{S(k,\theta,\lambda)}\,\frac{(\theta+\lambda n)^{n+k-1}}{n!}\,e^{-(\theta+\lambda n)},\quad{n\in\nat_0}
\end{equation}
for $k\in\ganz$, $\theta\geq0$ and $\lambda\in(0,1)$ with the obvious restriction $\theta>0$ in case $k\leq0$ and with normalizing constants introduced by Consul and Jain \cite{CJ}
$$S(k,\theta,\lambda)=\sum_{n=0}^\infty\frac{(\theta+\lambda n)^{n+k-1}}{n!}\,e^{-(\theta+\lambda n)}.$$
For $k=0$ and $\theta>0$ Consul and Jain recover $p_0(\theta,\lambda;n)=p_{S\!D}(n)$ with $S(0,\theta,\lambda)=\theta^{-1}$ and for $k=1$ we see that $p_1(\theta,\lambda;n)=p_{SU}(n)$ with $S(1,\theta,\lambda)=(1-\lambda)^{-1}$. The motivation in \cite{CJ} for introducing these quantities is to calculate the mean and variance of a GPD by means of the recursive relation
\begin{equation*}\begin{split}
S(k,\theta,\lambda) & =\theta\sum_{n=0}^\infty\frac{(\theta+\lambda n)^{n+(k-1)-1}}{n!}\,e^{-(\theta+\lambda n)}+\lambda\sum_{n=1}^\infty\frac{(\theta+\lambda n)^{(n-1)+k-1}}{(n-1)!}\,e^{-(\theta+\lambda n)}\\
&=\theta S(k-1,\theta,\lambda)+\lambda S(k,\theta+\lambda,\lambda).
\end{split}\end{equation*}
Starting with $S(0,\theta,\lambda)=\theta^{-1}$ (respectively $S(1,\theta,\lambda)=(1-\lambda)^{-1}$ in case $\theta=0$) a repeated use of this formula enables to calculate the normalizing constants recursively by
\begin{equation}\label{Srec}
S(k,\theta,\lambda)=\begin{cases}
\displaystyle\sum_{n=0}^\infty\lambda^n(\theta+\lambda n)\,S(k-1,\theta+\lambda n,\lambda) & \quad\text{ for }k\in\nat,\\
\theta^{-1}\big(S(k+1,\theta,\lambda)-\lambda S(k+1,\theta+\lambda,\lambda)\big) & \quad\text{ for }k\in-\nat.\end{cases}
\end{equation}
For example we get
$$S(-1,\theta,\lambda)=\frac1{\theta}\left(\frac1{\theta}-\frac{\lambda}{\theta+\lambda}\right)=\frac{\theta(1-\lambda)+\lambda}{\theta^2(\theta+\lambda)}>0.$$
The approach of Consul and Jain generalizes to all higher order moments of the distributions \eqref{rsDm} as follows.
\begin{lemma}\label{mom}
For $k\in\ganz$, $\theta\geq0$ and $\lambda\in(0,1)$, with $\theta>0$ in case $k\leq0$, let $X_k(\theta,\lambda)$ be a random variable on $\nat_0$ with distribution \eqref{rsDm}. Then for $m\in\nat$ we have
\begin{equation}\label{rsDmmom}
\Exp\left[(X_k(\theta,\lambda))^m\right]=\frac{S(k+1,\theta+\lambda,\lambda)}{S(k,\theta,\lambda)}\sum_{\ell=0}^{m-1}{m-1\choose\ell}\Exp\left[(X_{k+1}(\theta+\lambda,\lambda))^\ell\right].
\end{equation}
The obvious relation $\Exp[(X_k(\theta,\lambda))^0]=1$ together with the recursion \eqref{Srec}, or \eqref{Salt} below, enables to calculate the moments \eqref{rsDmmom} explicitly.
\end{lemma}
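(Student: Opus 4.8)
The plan is to compute $\Exp[(X_k(\theta,\lambda))^m]$ directly from its defining series in \eqref{rsDm} and to reshape the summand so that a single index shift turns the parameters $(k,\theta)$ into $(k+1,\theta+\lambda)$, after which the binomial theorem produces the lower moments of $X_{k+1}(\theta+\lambda,\lambda)$. By definition,
$$\Exp[(X_k(\theta,\lambda))^m]=\frac{1}{S(k,\theta,\lambda)}\sum_{n=0}^\infty n^m\,\frac{(\theta+\lambda n)^{n+k-1}}{n!}\,e^{-(\theta+\lambda n)}.$$
For $m\in\nat$ the $n=0$ term vanishes, so the effective range is $n\geq1$, which is exactly what makes the shift available.

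I would then cancel one power of $n$ via $n^m/n!=n^{m-1}/(n-1)!$ and substitute $n=j+1$. Since $\theta+\lambda n=(\theta+\lambda)+\lambda j$ and the exponent becomes $n+k-1=j+(k+1)-1$, the summand turns into $(j+1)^{m-1}$ times the unnormalized mass of $X_{k+1}(\theta+\lambda,\lambda)$ at $j$. Factoring out the normalizing constant $S(k+1,\theta+\lambda,\lambda)$ gives
$$\Exp[(X_k(\theta,\lambda))^m]=\frac{S(k+1,\theta+\lambda,\lambda)}{S(k,\theta,\lambda)}\sum_{j=0}^\infty (j+1)^{m-1}\,p_{k+1}(\theta+\lambda,\lambda;j).$$
Expanding $(j+1)^{m-1}=\sum_{\ell=0}^{m-1}{m-1\choose\ell}j^\ell$ and interchanging the two summations identifies the inner sums as $\Exp[(X_{k+1}(\theta+\lambda,\lambda))^\ell]$, which is precisely \eqref{rsDmmom}.

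The only genuine point to verify is that these series converge and that the interchange of summation is legitimate; no deeper idea is required. Here the situation is benign: every summand is nonnegative, so Tonelli's theorem justifies the swap as soon as the moments are finite. Finiteness follows from Stirling's formula, since $(\lambda n)^n/n!\cdot e^{-\lambda n}$ is asymptotically a constant multiple of $(\lambda e^{1-\lambda})^n/\sqrt{n}$ and $\lambda e^{1-\lambda}<1$ for $\lambda\in(0,1)$; the terms therefore decay geometrically and dominate any polynomial factor $n^m$, so all moments exist. I expect the main source of error to be the bookkeeping in the shift $n\mapsto n-1$, which must simultaneously increment $k$ to $k+1$ and replace $\theta$ by $\theta+\lambda$ inside both the power and the exponential, rather than any analytic difficulty.
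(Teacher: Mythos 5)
Your proposal is correct and follows essentially the same route as the paper's own proof: cancel one factor of $n$ against $n!$, shift the summation index so that $(k,\theta)$ becomes $(k+1,\theta+\lambda)$, expand $(j+1)^{m-1}$ binomially, and identify the lower moments of $X_{k+1}(\theta+\lambda,\lambda)$. Your added justification of convergence and the interchange of sums via Stirling and Tonelli is a harmless (and correct) supplement that the paper omits.
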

\begin{proof}
By \eqref{rsDm} we have for $m\in\nat$
\begin{align*}
\Exp\left[(X_k(\theta,\lambda))^m\right] & =\sum_{n=0}^\infty n^mp_k(\theta,\lambda;n)=\frac{1}{S(k,\theta,\lambda)}\sum_{n=0}^\infty n^m\frac{(\theta+\lambda n)^{n+k-1}}{n!}\,e^{-(\theta+\lambda n)}\\
& =\frac{1}{S(k,\theta,\lambda)}\sum_{n=0}^\infty (n+1)^{m-1}\frac{((\theta+\lambda)+\lambda n)^{n+(k+1)-1}}{n!}\,e^{-((\theta+\lambda)+\lambda n)}\\
& =\frac{1}{S(k,\theta,\lambda)}\sum_{n=0}^\infty\sum_{\ell=0}^{m-1}{m-1\choose\ell} n^\ell S(k+1,\theta+\lambda,\lambda)\,p_{k+1}(\theta+\lambda,\lambda;n)\\
& =\frac{S(k+1,\theta+\lambda,\lambda)}{S(k,\theta,\lambda)}\sum_{\ell=0}^{m-1}{m-1\choose\ell}\Exp\left[(X_{k+1}(\theta+\lambda,\lambda))^\ell\right]
\end{align*}
concluding the proof.
\end{proof}
It is easy to see that for $k\in\ganz$ the distributions \eqref{rsDm} fulfill the recursive relation
\begin{equation}\label{rsDmrecur}
p_k(\theta,\lambda;n)=\frac{S(k,\theta+\lambda,\lambda)}{S(k,\theta,\lambda)}\,\left(\lambda+\frac{\theta}{n}\right)\,p_k(\theta+\lambda,\lambda;n-1)\quad\text{ for }n\in\nat.
\end{equation}
\begin{remark}
Alternatively, it is also possible to calculate the moments of $X_k(\theta,\lambda)$ in Lemma \ref{mom} using the identity
$$\Exp\left[\big(\theta+\lambda\cdot X_k(\theta,\lambda)\big)^m\right]=\frac{S(k+m,\theta,\lambda)}{S(k,\theta,\lambda)},$$
which directly follows from \eqref{rsDm}.
\end{remark}
Our aim is to show that \eqref{rsDm} defines a certain compound distribution with Borel summands at least for $k\in\nat_0$. Since for $k=0$ \eqref{rsDm} defines a GPD it suffices to consider $k\in\nat$.
\begin{theorem}\label{rsDmeq}
For $k\in\nat$ let $Z^{(k)}$ be a random variable on $\nat_0$ with distribution \eqref{rsDm}, i.e.\ for parameters $\theta\geq0$ and $\lambda\in(0,1)$ we have
$$P\{Z^{(k)}=n\}=p_k(\theta,\lambda;n)=\frac{1}{S(k,\theta,\lambda)}\,\frac{(\theta+\lambda n)^{n+k-1}}{n!}\,e^{-(\theta+\lambda n)},\quad{n\in\nat_0}.$$
Let $(Y_n)_{n\in\nat}$ be an i.i.d.\ sequence with a Borel distribution of parameter $\lambda$ and let $(N^{(m)})_{m\in\nat}$ be independent random variables with a Delaporte distribution \eqref{Delconv} of parameters $\theta,\lambda$ and $m\in\nat$. Then the distribution of $Z^{(k)}$ is representable as the compound randomly shifted Delaporte mixture
\begin{equation}\label{rsDmrep}
Z^{(k)}\eqd\sum_{n=1}^{V_k+N^{(k+V_k)}}Y_n,
\end{equation}
where $V_k$ is a random variable on $\{0,\ldots,k-1\}$ such that $V_k,N^{(k)},\ldots,N^{(2k-1)},(Y_n)_{n\in\nat}$ are independent and the distribution of $V_k$ can be written as
\begin{equation}\label{Vdistr}
P\{V_k=n\}=\frac{(1-\lambda)^{-1}}{S(k,\theta,\lambda)}\,q_k(n),\quad n=0,\ldots,k-1,
\end{equation}
where the quantities $q_k(n)$ are recursively given by $q_1(0)=1$ and 
\begin{equation}\label{qrec}
q_k(n)=\frac{(\theta+\lambda n)q_{k-1}(n)}{1-\lambda}+\frac{\lambda^2(k+n-2)q_{k-1}(n-1)}{(1-\lambda)^2},\quad n=0,\ldots,k-1
\end{equation}
with the convention $q_k(-1)=0=q_k(k)$ for every $k\in\nat$. 
\end{theorem}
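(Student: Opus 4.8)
The plan is to work entirely with probability generating functions and to set up an induction on $k$. Write $\Phi_k(z)=\sum_{n\ge0}p_k(\theta,\lambda;n)z^n$ for the pgf of $Z^{(k)}$ and, since the Borel pgf $G$ is a strictly increasing analytic bijection on $[0,1)$ (invertible through $z=G(z)\exp(-\lambda(G(z)-1))$ by \eqref{Borelpgf}), regard $\Phi_k$ as a function $\psi_k$ of the new variable $w=G(z)$, so that $\Phi_k(z)=\psi_k(w)$. The representation \eqref{rsDmrep} is equivalent to an identity between $\psi_k$ and the pgf of the proposed mixture, and I will show that both sides obey the same first order recursion in $k$, the recursion \eqref{qrec} being exactly the condition that makes this work.

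First I would record the elementary relation $p_k(\theta,\lambda;n)=\frac{S(k-1,\theta,\lambda)}{S(k,\theta,\lambda)}(\theta+\lambda n)\,p_{k-1}(\theta,\lambda;n)$, read off from \eqref{rsDm}. Passing to generating functions gives $\frac{S(k,\theta,\lambda)}{S(k-1,\theta,\lambda)}\Phi_k(z)=\theta\Phi_{k-1}(z)+\lambda z\Phi_{k-1}'(z)$, and after substituting $w=G(z)$ and using $zG'(z)=\frac{G(z)}{1-\lambda G(z)}$ from \eqref{Gprime} this becomes the clean recursion
\[\frac{S(k,\theta,\lambda)}{S(k-1,\theta,\lambda)}\,\psi_k(w)=\theta\,\psi_{k-1}(w)+\frac{\lambda w}{1-\lambda w}\,\psi_{k-1}'(w),\]
where the prime now denotes $d/dw$. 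Second, I would compute the pgf of the proposed mixture: conditioning on $V_k=j$ and using that the Delaporte law \eqref{Delconv} has pgf $H^m(w)\exp(\theta(w-1))$ while the shift contributes a factor $w^j$, the number of Borel summands $V_k+N^{(k+V_k)}$ has pgf $\exp(\theta(w-1))\sum_{j=0}^{k-1}P\{V_k=j\}\,w^jH^{k+j}(w)$, whence $Z^{(k)}$ has pgf obtained by composing with $G$. Inserting \eqref{Vdistr} and $H(w)=\frac{1-\lambda}{1-\lambda w}$ and writing $t=\frac{(1-\lambda)w}{1-\lambda w}$, the candidate pgf takes the compact form
\[\widetilde\psi_k(w)=\frac{(1-\lambda)^{k-1}}{S(k,\theta,\lambda)}\,\frac{e^{\theta(w-1)}}{(1-\lambda w)^k}\,R_k(t),\qquad R_k(t):=\sum_{j=0}^{k-1}q_k(j)\,t^j.\]

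The induction then runs as follows. For $k=1$ we have $V_1\equiv0$ and $N^{(1)}$ is the Bartlett variable \eqref{Barconv}, so $\widetilde\psi_1=\psi_1$ by Theorem \ref{Barmix}; this matches $q_1(0)=1$ and $S(1,\theta,\lambda)=(1-\lambda)^{-1}$. For the step I would assume $\widetilde\psi_{k-1}=\psi_{k-1}$, feed $\widetilde\psi_{k-1}$ into the recursion from the first step, and verify that the outcome equals $\widetilde\psi_k$. Differentiating $\widetilde\psi_{k-1}$, using $\frac{d}{dw}(1-\lambda w)^{-(k-1)}=(k-1)\lambda(1-\lambda w)^{-k}$, $\frac{dt}{dw}=\frac{1-\lambda}{(1-\lambda w)^2}$ and the simplification $(1-\lambda w)+\lambda w=1$, the whole identity collapses, after clearing the common factor $e^{\theta(w-1)}(1-\lambda w)^{-k}$ and using $\frac{w}{1-\lambda w}=\frac{t}{1-\lambda}$, to the polynomial identity
\[R_k(t)=\tfrac{1}{1-\lambda}\big(\theta R_{k-1}(t)+\lambda t\,\dot R_{k-1}(t)\big)+\tfrac{\lambda^2}{(1-\lambda)^2}\big((k-1)t\,R_{k-1}(t)+t^2\,\dot R_{k-1}(t)\big),\]
with $\dot R=dR/dt$. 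Reading this off coefficient by coefficient in $t^n$ is precisely the recursion \eqref{qrec} for the $q_k(n)$, which closes the induction.

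The main obstacle is the bookkeeping in this last computation: one has to carry the derivative of the compound object correctly through the change of variables $z\mapsto w=G(z)$ and then recognise the two groups of terms — the $(\theta+\lambda n)$ weighting and the degree shift $q_{k-1}(n)\mapsto q_{k-1}(n-1)$, which is the conceptual source of the simultaneous increment of the shift and of the Delaporte index $m\mapsto m+1$ — as the two summands of \eqref{qrec}. Finally I would tie up the probabilistic loose ends: the recursion \eqref{qrec} manifestly keeps $\deg R_k\le k-1$, so that $V_k$ is supported on $\{0,\dots,k-1\}$, and keeps all $q_k(n)\ge0$, so that \eqref{Vdistr} defines nonnegative weights; that they sum to one is then automatic, since once $\psi_k=\widetilde\psi_k$ is established, evaluation at $w=1$ (that is, $z=1$, $t=1$) yields $\widetilde\psi_k(1)=\Phi_k(1)=1$.
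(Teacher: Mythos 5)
Your proof is correct and follows essentially the same route as the paper's: both derive the pgf recursion in $k$ from $p_k(\theta,\lambda;n)=\frac{S(k-1,\theta,\lambda)}{S(k,\theta,\lambda)}(\theta+\lambda n)\,p_{k-1}(\theta,\lambda;n)$, compute the mixture's pgf by conditioning on $V_k$, and reduce the induction step to the coefficient identity \eqref{qrec}. Your substitution $t=\frac{(1-\lambda)w}{1-\lambda w}$ merely repackages the paper's direct matching of the terms $\left(\frac{1-\lambda}{1-\lambda z}\right)^{k+\ell}z^\ell$ as a polynomial identity in $t$.
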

The following corollary shows that we can avoid infinite summation in \eqref{Srec} due to \eqref{Vdistr} and \eqref{qrec}.
\begin{cor}
For $k \in \nat$, $\theta \ge 0$, and $\lambda \in (0,1)$ it holds that
\begin{equation}\label{Salt}
S(k,\theta,\lambda)=\frac1{1-\lambda}\sum_{n=0}^{k-1}q_k(n).
\end{equation}
\end{cor}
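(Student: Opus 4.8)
The plan is to read the identity off directly from the probabilistic content of Theorem \ref{rsDmeq}, which does all the real work. That theorem asserts that $V_k$ is a bona fide random variable taking values in $\{0,\ldots,k-1\}$ whose probability mass function is given by \eqref{Vdistr}; in particular its total mass equals one, and this is the only fact I need.

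First I would sum \eqref{Vdistr} over all admissible values. Since $V_k$ is a probability distribution on $\{0,\ldots,k-1\}$, the factors $(1-\lambda)^{-1}$ and $S(k,\theta,\lambda)^{-1}$ are constant in the summation index and pull out, so that
$$1=\sum_{n=0}^{k-1}P\{V_k=n\}=\frac{(1-\lambda)^{-1}}{S(k,\theta,\lambda)}\sum_{n=0}^{k-1}q_k(n).$$
Solving this single scalar equation for $S(k,\theta,\lambda)$ immediately gives
$$S(k,\theta,\lambda)=\frac1{1-\lambda}\sum_{n=0}^{k-1}q_k(n),$$
which is precisely \eqref{Salt}.

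There is essentially no obstacle remaining at this stage: the substance has already been absorbed into Theorem \ref{rsDmeq}, where the representation \eqref{rsDmrep} forces \eqref{Vdistr} to be a genuine distribution and the finite recursion \eqref{qrec} for the weights $q_k(n)$ is established. The corollary is nothing more than the normalization condition for $V_k$ rewritten as a formula for $S(k,\theta,\lambda)$. I would resist the temptation to instead prove \eqref{Salt} by an independent induction on $k$ via \eqref{Srec}: the recursion for $S(k,\theta,\lambda)$ valid for $k\in\nat$ involves an infinite sum over $\nat_0$ and shifts the argument $\theta$, so a direct inductive reconciliation with the finite recursion \eqref{qrec} would be awkward and circular. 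The whole point, as noted just before the statement, is that the finitely supported weights $q_k(n)$, $n=0,\ldots,k-1$, together with \eqref{qrec}, replace the infinite summation in \eqref{Srec} by a finite one, and the clean probabilistic route above makes exactly this replacement transparent.
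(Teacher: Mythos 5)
Your proof is correct and is exactly the argument the paper intends: the corollary is stated without a separate proof because it is just the normalization $\sum_{n=0}^{k-1}P\{V_k=n\}=1$ of the distribution \eqref{Vdistr} established in Theorem \ref{rsDmeq} (equivalently, evaluating \eqref{Grep} and \eqref{rsNBpgf} at $z=1$), solved for $S(k,\theta,\lambda)$.
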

\begin{remark}
Note that for $k=0$ and $\theta>0$ the statement of Theorem \ref{rsDmeq} remains true by setting $V_0=0$ almost surely, since the Delaporte distribution $N^{(0)}$ with vanishing negative binomial part can be interpreted as a Poisson distribution of parameter $\theta>0$. Thus the distribution of   $Z^{(0)}$ is a GPD of Section 3.1.
\end{remark}
\begin{proof}[Proof of Theorem \ref{rsDmeq}]
Let $G$ be the pgf of a Borel distribution of parameter $\lambda\in(0,1)$ and for every $k\in\nat_0$ let $G_k$ be the pgf of \eqref{rsDm}. We already know that $G_0(z)=\exp(\theta(G(z)-1))$ is the pgf of a GPD. Further, note that
\begin{align*}
& \sum_{m=0}^\infty z^m P \left \{\sum_{n=1}^{V_k+N^{(k+V_k)}}Y_n = m \right\} = \sum_{m=0}^\infty z^m \sum_{\ell=0}^{k-1} P \left \{\sum_{n=1}^{\ell+N^{(k+\ell)}}Y_n = m \right\} P \{V_k = \ell \} \\
& \quad= \sum_{\ell=0}^{k-1} P \{V_k = \ell \} \sum_{m=0}^\infty z^m P \left \{\sum_{n=1}^{\ell+N^{(k+\ell)}}Y_n = m \right\} \\
& \quad=G_0(z) \sum_{\ell=0}^{k-1} P \{V_k = \ell \} G^\ell(z) \left (\frac{1-\lambda}{1-\lambda G(z)} \right)^{k+l} \\
& \quad = G_0(z) f_k(G(z)),
\end{align*}
where $f_k$ is the pgf of the corresponding mixture of shifted negative binomial distributions. Hence, to prove \eqref{rsDmrep} it remains to  show inductively that for $k\in\nat$ we have
\begin{equation}\label{Grep}
G_k(z)=G_0(z)f_k(G(z)),
\end{equation}
where by \eqref{Vdistr} and the above calculation
\begin{equation}\label{rsNBpgf}
f_k(z)=\frac{(1-\lambda)^{-1}}{S(k,\theta,\lambda)}\sum_{\ell=0}^{k-1}q_k(\ell)\left(\frac{1-\lambda}{1-\lambda z}\right)^{k+\ell}z^\ell.
\end{equation}
Note that for $k=1$ with $q_1(0)=1$ we have that $f_1(z)=\frac{1-\lambda}{1-\lambda z}$ is the pgf of a geometric distribution on $\nat_0$ in accordance with our statement. Differentiating \eqref{Grep} we get using \eqref{Gprime}
\begin{equation}\label{Gkprime}\begin{split}
G_k'(z) & =G_0'(z)f_k(G(z))+G_0(z)G'(z)f_k'(G(z))\\
& =G_0(z)\theta G'(z)f_k(G(z))+G_0(z)G'(z)f_k'(G(z))\\
& =G_0(z)\frac{G(z)}{z(1-\lambda G(z))}\,\big(\theta f_k(G(z))+f_k'(G(z))\big).
\end{split}\end{equation}
From \eqref{rsDm} we get using \eqref{Grep} and \eqref{Gkprime}
\begin{align*}
G_k(z) & =\sum_{n=0}^\infty z^np_k(\theta,\lambda;n)=\sum_{n=0}^\infty z^n\frac{S(k-1,\theta,\lambda)}{S(k,\theta,\lambda)}\,(\theta+\lambda n)\,p_{k-1}(\theta,\lambda;n)\\
& =\frac{S(k-1,\theta,\lambda)}{S(k,\theta,\lambda)}\,\big(\theta G_{k-1}(z)+\lambda z G_{k-1}'(z)\big)\\
& =\frac{S(k-1,\theta,\lambda)}{S(k,\theta,\lambda)}G_0(z)\Big(\theta f_{k-1}(G(z))+\frac{\lambda G(z)}{1-\lambda G(z)}\,\big(\theta f_{k-1}(G(z))+f_{k-1}'(G(z))\big)\Big)\\
& =\frac{S(k-1,\theta,\lambda)}{S(k,\theta,\lambda)}\,G_0(z)\,\frac{\theta f_{k-1}(G(z))+\lambda G(z)f_{k-1}'(G(z))}{1-\lambda G(z)}.
\end{align*}
Hence, in order to prove \eqref{Grep} it suffices to show
\begin{equation}\label{fkrec}
f_k(z)=\frac{S(k-1,\theta,\lambda)}{S(k,\theta,\lambda)}\,\frac{\theta f_{k-1}(z)+\lambda z\,f_{k-1}'(z)}{1-\lambda z}.
\end{equation}
This will follow from the recursion of $q_k$ defined in  \eqref{qrec}. Observe that
\begin{align*}
& \frac{S(k-1,\theta,\lambda)}{S(k,\theta,\lambda)}\,\frac{\theta f_{k-1}(z)+\lambda z\,f_{k-1}'(z)}{1-\lambda z}\\
& \quad=\frac{S(k-1,\theta,\lambda)}{S(k,\theta,\lambda)}\,\frac1{1-\lambda z}\left(\theta\frac{(1-\lambda)^{-1}}{S(k-1,\theta,\lambda)}\sum_{\ell=0}^{k-2}q_{k-1}(\ell)\left(\frac{1-\lambda}{1-\lambda z}\right)^{k-1+\ell}z^\ell\right.\\
& \quad+\lambda z\frac{(1-\lambda)^{-1}}{S(k-1,\theta,\lambda)}\sum_{\ell=0}^{k-2}\left[(k-1+\ell)q_{k-1}(\ell)\left(\frac{1-\lambda}{1-\lambda z}\right)^{k-2+\ell}z^\ell\frac{\lambda(1-\lambda)}{(1-\lambda z)^2}\right.\\
& \quad\qquad\qquad\qquad\qquad\qquad\left.\left.+\ell q_{k-1}(\ell)\left(\frac{1-\lambda}{1-\lambda z}\right)^{k-1+\ell}z^{\ell-1}\right]\right)\\
& \quad=\frac{(1-\lambda)^{-1}}{S(k,\theta,\lambda)}\left(\sum_{\ell=0}^{k-2}\frac{\theta q_{k-1}(\ell)}{1-\lambda}\left(\frac{1-\lambda}{1-\lambda z}\right)^{k+\ell}z^\ell\right.\\
& \quad \qquad \qquad\quad+\sum_{\ell=1}^{k-1}\frac{\lambda^2(k+\ell-2)q_{k-1}(\ell-1)}{(1-\lambda)^2}\left(\frac{1-\lambda}{1-\lambda z}\right)^{k+\ell}z^\ell\\
& \left.\quad\qquad \qquad\quad+\sum_{\ell=1}^{k-2}\frac{\lambda\ell q_{k-1}(\ell)}{1-\lambda}\left(\frac{1-\lambda}{1-\lambda z}\right)^{k+\ell}z^\ell\right)\\
& \quad= \frac{(1-\lambda)^{-1}}{S(k,\theta,\lambda)}\sum_{\ell=0}^{k-1}\left(\!\frac{(\theta+\lambda\ell)q_{k-1}(\ell)}{1-\lambda}+\frac{\lambda^2(k+\ell-2)q_{k-1}(\ell-1)}{(1-\lambda)^2}\!\right)\left(\frac{1-\lambda}{1-\lambda z}\right)^{k+\ell}\!z^\ell\\
& \quad = f_k(z),
\end{align*}
where the last equality follows from the recursive definition \eqref{qrec} of $q_k$
and \eqref{rsNBpgf}. This shows \eqref{fkrec} and concludes the proof.
\end{proof}
\begin{remark}
Note that for $k\in-\nat$ in general it is not possible to represent \eqref{rsDm} as a compound distribution with Borel summands. To demonstrate this fact, assume that for $k=-1$
$$p_{-1}(\theta,\lambda;n)=P\left\{\sum_{k=1}^NY_k=n\right\}\quad\text{ for all }n\in\nat_0,$$
where $(Y_n)_{n\in\nat}$ are i.i.d.\ random variables with a Borel distribution of parameter $\lambda\in(0,1)$ and $N$ is an independent random variable with values in $\nat_0$. Then, setting $C:=S(-1,\theta,\lambda)^{-1}>0$ we have
\begin{align*}
P\{N=0\} & =p_{-1}(\theta,\lambda;0)=C\cdot \theta^{-2}e^{-\theta},\\
P\{N=1\} & =P\{Y_1=1\}^{-1}p_{-1}(\theta,\lambda;1)=C\cdot (\theta+\lambda)^{-1}e^{-\theta}
\end{align*}
and hence
\begin{align*}
C\cdot\frac12\,e^{-(\theta+2\lambda)} & =p_{-1}(\theta,\lambda;1)=P\left\{\sum_{k=1}^NY_k=2\right\}\\
& =P\{Y_1=2\}P\{N=1\}+P\{Y_1=Y_2=1\}P\{N=2\}\\
& =C\cdot\frac{\lambda}{\theta+\lambda}e^{-(\theta+2\lambda)}+e^{-2\lambda}P\{N=2\}.
\end{align*}
Thus, for small values of $\theta$ we obtain the contradiction
$$P\{N=2\}=C\cdot\left(\frac12-\frac{\lambda}{\theta+\lambda}\right)e^{-\theta}<0.$$
For $k\in-\nat$ it remains an open question, whether under certain conditions it might be possible to represent \eqref{rsDm} as a compound distribution with Borel summands. 
\end{remark}

\subsection{Overview on the presented mixtures of the Borel distribution}

Let $(Y_n)_{n\in\nat}$ be an i.i.d.\ sequence of random variables with a Borel distribution of parameter $\lambda\in(0,1)$ and let $N$ be an independent random variable on $\nat_0$. As before $Z=\sum_{k=1}^NY_k$ denotes a random variable with the corresponding compound distribution. Table 1 gives an overview on the presented compound distributions, where $P(\theta)$, $G(\lambda)$ and  $N\!B(\lambda,m)$ denote the Poisson distribution with parameter $\theta>0$, the geometric distribution on $\nat_0$ with parameter $\lambda$ and the negative binomial distribution with parameters $\lambda$ and $m\geq2$. The independent random variables $V_k$ with distribution \eqref{Vdistr} and $N^{(m)}$ with Delaporte distribution $P(\theta)\ast N\!B(\lambda,m)$ are as in Theorem \ref{rsDmeq}.

\begin{table}\begin{tabular}{|c|l|c|c|}\hline
$\phantom{\Big|}N$ & $P\{Z=n\}$ & $\Exp[Z]$ & $\Var(Z)$ \\ \hline\hline
$\phantom{\Big|}N=1$ & $\frac{(\lambda n)^{n-1}}{n!}\,e^{-\lambda n}$ & $\frac{1}{1-\lambda}$ & $\frac{\lambda}{(1-\lambda)^3}$ \\ \hline
$\phantom{\Big|}N=m$ & $\frac{m(\lambda n)^{n-m}}{n(n-m)!}\,e^{-\lambda n}$ & $\frac{m}{1-\lambda}$ & $\frac{m\lambda}{(1-\lambda)^3}$ \\ \hline
$\phantom{\Big|}P(\theta)$ & $\frac{\theta(\theta+\lambda n)^{n-1}}{n!}\,e^{-(\theta+\lambda n)}$ & $\frac{\theta}{1-\lambda}$ & $\frac{\theta}{(1-\lambda)^3}$ \\ \hline
$\phantom{\Big|}P(\theta)\ast G(\lambda)$ & $\frac{(1-\lambda)(\theta+\lambda n)^{n}}{n!}\,e^{-(\theta+\lambda n)}$ & $\frac{\theta}{1-\lambda}+\frac{\lambda}{(1-\lambda)^2}$ & $\frac{\theta}{(1-\lambda)^3}+\frac{\lambda^2+\lambda}{(1-\lambda)^4}$ \\ \hline
$\phantom{\Big|}P(\theta)\ast N\!B(\lambda,m)$ & $\frac{(1-\lambda)^m(\theta+\lambda n+\lambda\,\alpha(m-1))^{n}}{n!}\,e^{-(\theta+\lambda n)}$ & $\frac{\theta}{1-\lambda}+\frac{m\lambda}{(1-\lambda)^2}$ & $\frac{\theta}{(1-\lambda)^3}+\frac{m(\lambda^2+\lambda)}{(1-\lambda)^4}$ \\ \hline
$\phantom{\Big|}N=V_k+N^{(k+V_k)}$ & $\frac{1}{S(k,\theta,\lambda)}\,\frac{(\theta+\lambda n)^{n+k-1}}{n!}\,e^{-(\theta+\lambda n)}$ & by \eqref{rsDmmom} & by \eqref{rsDmmom} \\ \hline
\end{tabular}
\vspace*{1ex}
\caption{\small Overview on the distributions (rows: Borel, Borel-Tanner, GPD, compound Bartlett, compound Delaporte, compound shifted Delaporte mixture) and their expectation and variance.}
\end{table}

Note that the Poisson, Bartlett and Delaporte distributions are infinitely divisible. The roots of the latter are known as Charlier distributions, see \cite{MB,WA} for details. Since the Borel distribution is a shifted compound Poisson distribution which follows from  \eqref{Borelpgf}, it is also infinitely divisible and thus it follows that the corresponding compound distributions with Borel summands of Sections 3.1--3.3 are infinitely divisible. It is an open question, whether the compound shifted Delaporte mixtures of Section 3.4 are infinitely divisible for $k\not\in\{0,1\}$.

We emphasize that the compound distributions of $Z$ in Table 1 all belong to the class of discrete general Lagrangian probability distributions also called Lagrange distributions of the first kind, i.e.\ we have $P\{Z=0\}=f(0)$ and
\begin{equation}\label{Ldfk}
P\{Z=n\}=\frac1{n!}\,\frac{d^{n-1}}{dz^{n-1}}\left\{g^n(z)f'(z)\right\}\big|_{z=0}\quad\text{ for }n\in\nat,
\end{equation}
where (in the simplest case) $f,g$ are pgf's of discrete distributions on $\nat_0$. For a comprehensive study of Lagrangian probability distributions we refer to the monograph \cite{CF}. In our model of total progeny in a Galton-Watson type branching process in Sections 2 and 3, $f$ is the pgf of the number $X_0$ of initial insurance claims or, equivalently, the pgf of the random number $N$ in Table 1, and $g(z)=\exp(\lambda(z-1))$ is the common pgf of the i.i.d.\ consequential claims, which here are assumed to have a Poisson distribution of parameter $\lambda\in(0,1)$. With these settings it follows readily from section 6.2 of \cite{CF} that \eqref{Ldfk} holds for all the distributions of $Z$ given in Table 1. This fact is well known for the distributions in the first four rows of Table 1, where due to $f(z)=z$ the Borel distribution in the first row is called a basic Lagrangian distribution, and due to $f(z)=z^m$ the Borel-Tanner distribution in the second row is called a delta Lagrangian distribution; cf.\ tables 2.1 and 2.2 in \cite{CF}. In Jain \cite{Jain} the compound Bartlett distribution is called a linear function Poisson distribution, which by Janardan \cite{Jan} is shown to be a weighted GPD, i.e.\ for a random variable $Z$ following a GPD \eqref{SD} we have the distribution
\begin{equation}\label{wGPD}
\frac{w(\theta,\lambda;n)\,p_{S\!D}(n)}{\Exp[w(\theta,\lambda;Z)]}\quad,\,n\in\nat_0,
\end{equation}
with nonnegative weights $w(\theta,\lambda;n)$ having positive and finite expectation in the denominator. In general, weighted Lagrangian distributions belong to the class of Lagrange distributions of the second kind given by
\begin{equation}\label{Ldsk}
\frac1{n!}\,\big(1-g'(1)\big)\,\frac{d^{n}}{dz^{n}}\left\{g^n(z)f(z)\right\}\big|_{z=0}\quad\text{ for }n\in\nat,
\end{equation}
provided that $g'(1)$ exists; see \cite{Jan} for details. Since by Theorem 2.1 of \cite{CF} the class of Lagrange distributions of the second kind belongs to those of the first kind, weighted Lagrangian distributions are general Lagrangian distributions as well. For the special weights $w(\theta,\lambda;n)=\theta+\lambda n$, clearly \eqref{wGPD} coincides with the compound Bartlett distribution $p_{SU}(n)$ in \eqref{SU} given in the fourth row of Table 1; cf.\ also \cite{Jan} or table 2.4 in \cite{CF}. The explicit forms of the Langrangian distributions in the last two rows of Table 1 seem to be new. Note that the distributions in \eqref{rsDm} for arbitrary $k\in\ganz$ can also be interpreted as weighted Lagrangian distributions with weights of the form $w_k(\theta,\lambda;n)=(\theta+\lambda n)^k$ in \eqref{wGPD}, since the normalizing constants in \eqref{rsDm} fulfill $S(k,\theta,\lambda)=\Exp[w_k(\theta,\lambda;Z)]$.

\section{recursive evaluation of related compound distributions}

We continue to give an actuarial interpretation of our models. Let $(U_n)_{n\in\nat}$ be an i.i.d.\ sequence of claim sizes with values in $\nat$ and with (known) probabilities $f(n)=P\{U_1=n\}$ for $n\in\nat$. The random number $Z$ of claims is assumed to be independent of $(U_n)_{n\in\nat}$ and to follow one of the distributions of Section 3, i.e.\ GPD, compound Bartlett, compound Delaporte or \eqref{rsDm}. Then the total claim size is again given by a compound sum
\begin{equation}\label{totcla}
T=\sum_{n=1}^{Z} U_n.
\end{equation}
If $Z$ follows a GPD, compound Bartlett distribution or, more generally a distribution of the form \eqref{rsDm} for some $k\in\ganz$, its distribution fulfills the recursive relation
\begin{equation}\label{GPDrec}
P\{Z=n\}=p(\theta,\lambda;n)=\left(a+\frac{b}{n}\right)p(\theta+\lambda,\lambda;n-1)\quad\text{ for }n\in\nat,
\end{equation}
where $a=\theta\lambda/(\theta+\lambda)$, $b=\theta^2/(\theta+\lambda)$ by \eqref{gpdrecur} in a case of a GPD and $a=\lambda$, $b=\theta$ by \eqref{Barmixrecur} in case of a compound Bartlett distributon with Borel summands. In the general case of a distribution of the form \eqref{rsDm} for some $k\in\ganz$ we additionally have to multiply the parameters $a$ and $b$ with a constant factor by \eqref{rsDmrecur}. Despite the fact that the first parameter $\theta$ changes to $\theta+\lambda$ on the right-hand side of \eqref{GPDrec} this relation is known to imply a simple recursion formula for the probabilities 
\begin{equation}\label{Tdense}
P\{T=n\}=q(\theta,\lambda;n)\quad\text{ for }n\in\nat_0
\end{equation}
of the aggregate claims in \eqref{totcla}. For details we refer to Panjer's classical result \cite{Pan} or its extensions in \cite{Schr1,Sun}; for an overview, e.g.\ see \cite{Dic,Schr2}. Originally these recursive methods were intended to decrease computational time but, due to ongrowing computer power, nowadays concurrent methods of numerical inversion of the pgf by FFT techniques with exponential tilting seem to be at least equally powerful; see \cite{EF}. Nevertheless, recursion formulas of Panjer type are easily programmable and thus provide a simple technique to calculate compound distributions. In case of a GPD the following result already appears in Theorem 5.1 of Ambagaspitiya and Balakrishnan \cite{AB} but without detailed proof. For an alternative method we refer to Goovaerts and Kaas \cite{GK}.
\begin{theorem}\label{Panrec}
For i.i.d.\ claim sizes $(U_n)_{n\in\nat}$ with distribution $f(n)=P\{U_1=n\}$, $n\in\nat$, and a claim number $Z$ fulfilling \eqref{GPDrec} the distribution \eqref{Tdense} of the total claim size \eqref{totcla} fulfills
\begin{align*}
q(\theta,\lambda;0) & =p(\theta,\lambda;0)\\
q(\theta,\lambda;n) & =\sum_{k=1}^n\left(a+\frac{bk}{n}\right)f(k)\,q(\theta+\lambda,\lambda;n-k) \quad\text{ for }n\in\nat.
\end{align*}
\end{theorem}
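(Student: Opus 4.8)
The plan is to adapt the classical Panjer recursion argument, the only genuinely new feature being that the governing recursion \eqref{GPDrec} shifts the first parameter from $\theta$ to $\theta+\lambda$, so I must carry this shift through every step. Throughout I abbreviate $p_\theta(n)=p(\theta,\lambda;n)$ and $q_\theta(n)=q(\theta,\lambda;n)$ with $\lambda$ fixed, and I write $f^{*m}$ for the $m$-fold convolution of the claim-size mass function $f$, so that by conditioning on the number of claims
$$q_\theta(n)=\sum_{m=0}^\infty p_\theta(m)\,f^{*m}(n),\qquad n\in\nat_0.$$
Since the claim sizes $U_i$ take values in $\nat$, we have $f^{*0}(n)=\delta_{0,n}$ and $f^{*m}(0)=0$ for $m\geq1$; reading off the $n=0$ term immediately gives $q_\theta(0)=p_\theta(0)=p(\theta,\lambda;0)$, which settles the first assertion.

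For $n\in\nat$ the $m=0$ summand vanishes, so I may start the sum at $m=1$ and insert the recursion \eqref{GPDrec}, obtaining
$$q_\theta(n)=\sum_{m=1}^\infty\Big(a+\frac{b}{m}\Big)\,p_{\theta+\lambda}(m-1)\,f^{*m}(n),$$
where $a,b$ are the coefficients attached to the parameter $\theta$. I will treat the two resulting sums separately. The $a$-part is handled by the plain convolution identity $f^{*m}(n)=\sum_{k=1}^n f(k)\,f^{*(m-1)}(n-k)$: after interchanging the (nonnegative) summations and relabelling $j=m-1$, the inner sum collapses to $\sum_{j\geq0}p_{\theta+\lambda}(j)\,f^{*j}(n-k)=q_{\theta+\lambda}(n-k)$, producing the term $a\sum_{k=1}^n f(k)\,q_{\theta+\lambda}(n-k)$.

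The crux of the argument, and the step I expect to be the main obstacle, is the $b$-part, where the factor $1/m$ must be absorbed. Here I invoke the exchangeability identity
$$\frac1m\,f^{*m}(n)=\sum_{k=1}^n\frac{k}{n}\,f(k)\,f^{*(m-1)}(n-k),$$
which follows because, conditionally on $\{U_1+\dots+U_m=n\}$, symmetry of the i.i.d.\ summands forces $\Exp[U_1\mid U_1+\dots+U_m=n]=n/m$; multiplying this by $P\{U_1+\dots+U_m=n\}=f^{*m}(n)$ yields $\sum_{k=1}^n k\,f(k)\,f^{*(m-1)}(n-k)=\tfrac{n}{m}\,f^{*m}(n)$. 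Substituting this identity, interchanging summations and once more recognizing $q_{\theta+\lambda}(n-k)$ gives the contribution $\tfrac{b}{n}\sum_{k=1}^n k\,f(k)\,q_{\theta+\lambda}(n-k)$.

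Adding the two contributions produces
$$q_\theta(n)=\sum_{k=1}^n\Big(a+\frac{bk}{n}\Big)f(k)\,q_{\theta+\lambda}(n-k),$$
which is exactly the claimed recursion. The only remaining routine point is to justify the rearrangement of the double sums, and this is immediate since every term is nonnegative, so Tonelli's theorem applies and all manipulations above are legitimate.
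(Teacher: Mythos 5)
Your proof is correct and follows essentially the same route as the paper: both arguments hinge on the exchangeability identity $\Exp[U_1\mid U_1+\cdots+U_m=n]=n/m$ to convert the factor $1/m$ from \eqref{GPDrec} into the weight $k/n$, and then recognize $q(\theta+\lambda,\lambda;n-k)$ after interchanging the summations. Phrasing the computation via convolution powers $f^{*m}$ rather than indicator functions, and splitting the $a$- and $b$-parts, are only cosmetic differences.
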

To calculate the distribution of the total claim size \eqref{Tdense} we can thus follow the recursive scheme
\begin{center}
\setlength{\unitlength}{.9cm}
\begin{picture}(16,6.5)
\put(0.5,1.5){\framebox(3,0.75){$q(\theta,\lambda;3)$}}
\put(0.5,2.75){\framebox(3,0.75){$q(\theta,\lambda;2)$}}
\put(0.5,4){\framebox(3,0.75){$q(\theta,\lambda;1)$}}
\put(0.5,5.25){\framebox(3,0.75){$q(\theta,\lambda;0)$}}
\put(4.5,1.5){\framebox(3,0.75){$q(\theta+\lambda,\lambda;2)$}}
\put(4.5,2.75){\framebox(3,0.75){$q(\theta+\lambda,\lambda;1)$}}
\put(4.5,4){\framebox(3,0.75){$q(\theta+\lambda,\lambda;0)$}}
\put(8.5,1.5){\framebox(3,0.75){$q(\theta+2\lambda,\lambda;1)$}}
\put(8.5,2.75){\framebox(3,0.75){$q(\theta+2\lambda,\lambda;0)$}}
\put(12.5,1.5){\framebox(3,0.75){$q(\theta+3\lambda,\lambda;0)$}}
\put(4.5,1.875){\vector(-1,0){1}}
\put(8.5,1.875){\vector(-1,0){1}}
\put(12.5,1.875){\vector(-1,0){1}}
\put(4.5,3.125){\vector(-1,0){1}}
\put(8.5,3.125){\vector(-1,0){1}}
\put(4.5,4.375){\vector(-1,0){1}}
\put(4.5,3.125){\vector(-1,-1){1}}
\put(4.5,4.375){\vector(-1,-1){1}}
\put(8.5,3.125){\vector(-1,-1){1}}
\put(4.5,4.375){\vector(-1,-2){1.05}}
\put(2,1){\circle*{0.08}}
\put(2,0.6){\circle*{0.08}}
\put(2,0.2){\circle*{0.08}}
\end{picture}
\end{center}
where for $m\in\nat_0$
$$q(\theta+m\lambda,\lambda;0)=p(\theta+m\lambda,\lambda;0)=\begin{cases}
e^{-(\theta+m\lambda)} & \text{for the GPD}\\
(1-\lambda)e^{-(\theta+m\lambda)} & \text{for the Bartlett mixture}\\
\frac{(\theta+m\lambda)^{k-1}}{S(k,\theta+m\lambda,\lambda)}\,e^{-(\theta+m\lambda)}& \text{for \eqref{rsDm} with $k\in\ganz$}
\end{cases}$$
by Theorems \ref{gpd}, \ref{Barmix} and \eqref{rsDm}, respectively.
\begin{proof}[Proof of Theorem \ref{Panrec}]
We follow a standard proof of Panjer's recursion, e.g.\ as given in Proposition 2.4 of \cite{AA} or Theorem 3.3.10 in \cite{Mik}. Since we assume $P\{U_1=0\}=0$, for $n=0$ we have
$$q(\theta,\lambda;0)=P\left\{\sum_{j=1}^{Z}U_j=0\right\}=p(\theta,\lambda;0).$$
Denote $S_m=\sum_{j=1}^mU_j$ for $m\in\nat$, then due to the i.i.d.\ nature of $(U_j)_{j\in\nat}$ for every $n\in\nat$ the conditional expectation $\Exp[a+b\frac{U_i}{n}\mid S_m=n]$ is independent of $i=1,\ldots,m$ and thus
$$\Exp\Big[a+b\,\frac{U_1}{n}\Big|S_m=n\Big]=\frac1m\sum_{i=1}^m\Exp\Big[a+b\,\frac{U_i}{n}\Big|S_m=n\Big]=\frac1m(ma+b)=a+\frac{b}{m}.$$
Hence for $n\in\nat$ we get by independence of $Z$, $(U_j)_{j\in\nat}$ and \eqref{GPDrec}
\begin{align*}
& q(\theta,\lambda;n)=\sum_{m=1}^\infty P\left\{\sum_{j=1}^mU_j=n,\,Z=m\right\}=\sum_{m=1}^nP\{S_m=n\}\,p(\theta,\lambda;m)\\
& \quad=\sum_{m=1}^nP\{S_m=n\}\left(a+\frac{b}{m}\right)\,p(\theta+\lambda,\lambda;m-1)\\
& \quad=\sum_{m=1}^n\Exp\left[\left(a+b\,\frac{U_1}{n}\right)\cdot 1_{\{S_m=n\}}\right]\,p(\theta+\lambda,\lambda;m-1)\\
& \quad=\sum_{m=1}^n\sum_{k=1}^{n-m+1}\Exp\left[\left(a+b\,\frac{k}{n}\right)\cdot 1_{\{S_m=n\}}\bigg|\,U_1=k\right]\,f(k)\,p(\theta+\lambda,\lambda;m-1)\\
& \quad=\sum_{m=1}^n\sum_{k=1}^{n-m+1}\left(a+b\,\frac{k}{n}\right)\,P\{S_{m-1}=n-k\}\,f(k)\,p(\theta+\lambda,\lambda;m-1)\\
& \quad=\sum_{k=1}^n\left(a+b\,\frac{k}{n}\right)\,f(k)\sum_{m=0}^{n-k}P\{S_{m}=n-k\}\,p(\theta+\lambda,\lambda;m)\\
& \quad=\sum_{k=1}^n\left(a+b\,\frac{k}{n}\right)\,f(k)\,q(\theta+\lambda,\lambda;n-k)
\end{align*}
concluding the proof.
\end{proof}
In case of the compound Delaporte distribution with Borel summands we need to introduce an additional parameter and may replace \eqref{GPDrec} by 
\begin{equation}\label{Delrec}
P\{Z=n\}=p(\theta,\lambda,m;n)=\sum_{i=1}^2\left(a_i+\frac{b_i}{n}\right)p(\theta+\lambda,\lambda,m+i-1;n-1)
\end{equation}
for all $n\in\nat$, where $a_1=a_2=0$ and $b_1=\theta + \lambda n$, $b_2=(m-1) \lambda/(1-\lambda)$ by Lemma \ref{Lem:Delmixrec} for the compound Delaporte distribution with Borel summands. Denoting the total claim size distribution by
\begin{equation}\label{TdenseDel}
P\{T=n\}=q(\theta,\lambda,m;n)\quad\text{ for }n\in\nat_0
\end{equation}
a similar result to Theorem \ref{Panrec} holds.
\begin{theorem}\label{PanrecDel}
For i.i.d.\ claim sizes $(U_n)_{n\in\nat}$ with distribution $f(n)=P\{U_1=n\}$, $n\in\nat$, and a claim number $Z$ fulfilling \eqref{Delrec} the distribution \eqref{TdenseDel} of the total claim size \eqref{totcla} fulfills
\begin{align*}
q(\theta,\lambda,m;0) & =p(\theta,\lambda,m;0)\\
q(\theta,\lambda,m;n) & =\sum_{i=1}^2\sum_{k=1}^n\left(a_i+\frac{b_ik}{n}\right)f(k)\,q(\theta+\lambda,\lambda,m+i-1;n-k) \quad\text{ for }n\in\nat.
\end{align*}
\end{theorem}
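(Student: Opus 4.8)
The plan is to run the proof of Theorem~\ref{Panrec} essentially verbatim, but to feed it the two-term claim-number recursion \eqref{Delrec} (equivalently Lemma~\ref{Lem:Delmixrec}) in place of the one-term recursion \eqref{GPDrec}, and to collect the two resulting contributions as the sum over $i=1,2$. To avoid a clash with the Delaporte parameter $m$, I would denote the claim-number summation index by $\ell$ and set $S_\ell=U_1+\dots+U_\ell$. The case $n=0$ is identical to Theorem~\ref{Panrec}: since $P\{U_1=0\}=0$ only $Z=0$ contributes, so $q(\theta,\lambda,m;0)=p(\theta,\lambda,m;0)$.

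For $n\in\nat$ I would first write $q(\theta,\lambda,m;n)=\sum_{\ell=1}^n P\{S_\ell=n\}\,p(\theta,\lambda,m;\ell)$, using independence of $Z$ and $(U_j)_{j\in\nat}$ and the fact that $U_j\ge 1$ truncates the sum at $\ell=n$. Substituting \eqref{Delrec} splits this into two sums indexed by $i$, with generic summand $(a_i+b_i/\ell)\,P\{S_\ell=n\}\,p(\theta+\lambda,\lambda,m+i-1;\ell-1)$. To each of these I would apply the exchangeability identity already established in the proof of Theorem~\ref{Panrec}, namely $\Exp[a_i+b_i U_1/n\mid S_\ell=n]=a_i+b_i/\ell$, then condition on $U_1=k$ via $P\{S_\ell=n,\,U_1=k\}=f(k)\,P\{S_{\ell-1}=n-k\}$, interchange the order of summation, and shift $\ell\mapsto\ell-1$ so that the inner sum collapses to $q(\theta+\lambda,\lambda,m+i-1;n-k)$. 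Adding the $i=1$ and $i=2$ contributions then yields the asserted recursion, with the extra parameter $m$ merely carried along and incremented to $m+i-1$.

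The single delicate point---and the main obstacle---is that the exchangeability identity demands that $a_i$ and $b_i$ be constant in the summation index $\ell$, whereas after substitution the $i=1$ coefficient reads $(\theta+\lambda\ell)/\ell$. I would therefore split it as $(\theta+\lambda\ell)/\ell=\lambda+\theta/\ell$, so that the genuine constants entering the argument are $a_1=\lambda$ and $b_1=\theta$; with these $\Exp[\lambda+\theta U_1/n\mid S_\ell=n]=\lambda+\theta/\ell$ exactly reproduces the coefficient, and the $i=1$ contribution comes out as $\sum_{k=1}^n(\lambda+\theta k/n)\,f(k)\,q(\theta+\lambda,\lambda,m;n-k)$. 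The $i=2$ coefficient $b_2=\lambda(m-1)/(1-\lambda)$ (with $a_2=0$) is already constant in $\ell$, so that term passes through the argument of Theorem~\ref{Panrec} unchanged. This decomposition into honest constants is exactly what makes the $(a_i+b_i k/n)$ form valid, and beyond it no ideas are required that are not already present in the proof of Theorem~\ref{Panrec}.
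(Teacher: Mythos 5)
Your proof is correct and follows exactly the route the paper intends: the paper's own proof of Theorem~\ref{PanrecDel} consists of the single remark that one repeats the argument of Theorem~\ref{Panrec} with \eqref{Delrec} in place of \eqref{GPDrec}, which is precisely what you carry out. Your observation that the $i=1$ coefficient must be read as $a_1=\lambda$, $b_1=\theta$ rather than the literal $a_1=0$, $b_1=\theta+\lambda n$ is well taken --- the exchangeability step requires $a_i$ and $b_i$ to be constant in the summation index, and the paper's stated parametrization reproduces the correct aggregate-claims recursion only in the degenerate case $f(1)=1$.
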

\begin{proof}
The assertion follows by the same line of arguments as given in the proof of Theorem \ref{Panrec} but using \eqref{Delrec} instead of \eqref{GPDrec}.
\end{proof}
To calculate the distribution of the total claim size \eqref{TdenseDel} we can thus follow the recursive scheme
\begin{center}
\setlength{\unitlength}{1cm}
\begin{picture}(15,7)
\put(0.5,1.5){\framebox(4,0.75){$q(\theta,\lambda,m;2)$}}
\put(0.5,3.625){\framebox(4,0.75){$q(\theta,\lambda,m;1)$}}
\put(0.5,5.75){\framebox(4,0.75){$q(\theta,\lambda,m;0)$}}
\put(5.5,1.125){\framebox(4,0.75){$q(\theta+\lambda,\lambda,m+1;1)$}}
\put(5.5,1.875){\framebox(4,0.75){$q(\theta+\lambda,\lambda,m;1)$}}
\put(5.5,3.25){\framebox(4,0.75){$q(\theta+\lambda,\lambda,m+1;0)$}}
\put(5.5,4){\framebox(4,0.75){$q(\theta+\lambda,\lambda,m;0)$}}
\put(10.5,0.75){\framebox(4,0.75){$q(\theta+2\lambda,\lambda,m+2;0)$}}
\put(10.5,1.5){\framebox(4,0.75){$q(\theta+2\lambda,\lambda,m+1;0)$}}
\put(10.5,2.25){\framebox(4,0.75){$q(\theta+2\lambda,\lambda,m;0)$}}
\put(5.5,1.875){\vector(-1,0){1}}
\put(5.5,4){\vector(-1,0){1}}
\put(10.5,1.5){\vector(-1,0){1}}
\put(10.5,2.25){\vector(-1,0){1}}
\put(5.5,4){\vector(-1,-2){1}}
\put(2.5,1){\circle*{0.08}}
\put(2.5,0.6){\circle*{0.08}}
\put(2.5,0.2){\circle*{0.08}}
\end{picture}
\end{center}
where  for $k,\ell\in\nat_0$
$$q(\theta+k\lambda,\lambda,m+\ell;0)=p(\theta+k\lambda,\lambda,m+\ell;0)=(1-\lambda)^{m+\ell}e^{-(\theta+k\lambda)}$$
in case of the compound Delaporte distribution by Theorem \ref{Delmix}. Note that we do not have to calculate the compound Delaporte distribution with Borel summands \eqref{Delmixdens} explicitly, i.e.\ we can avoid to calculate Riordan's $\alpha$-symbols $\alpha^k(m-1)$ in \eqref{Delmixdens}. In particular, Theorem \ref{PanrecDel} can be applied to recursively evaluate the compound Delaporte distribution with Borel summands itself when taking $a_1=a_2=0$, $b_1=\theta + \lambda n$, $b_2=(m-1)\lambda/(1-\lambda)$ and $f(1)=1$, $f(n)=0$ for all $n\geq2$.
\begin{remark}
It is also possible to derive similar recursion formulas to Theorems \ref{Panrec} and \ref{PanrecDel} for i.i.d.\ claim sizes $(U_n)_{n\in\nat}$ with an absolutely continuous distribution with respect to Lebesgue measure on $\rr_+$. This leads to certain variants of Volterra integral equations of the second kind which can only be solved numerically. In case of the GPD this integral equation is derived in Theorem 4.1 of \cite{AB}. We renounce to present these integral equations, since in our actuarial context these are of minor practical importance.
\end{remark}

\appendix
\section*{Appendix}
\renewcommand{\thesection}{A}
\setcounter{equation}{0}
\setcounter{theorem}{0}

Our aim is to derive the Borel distribution from the characteristic equation \eqref{Borelpgf} of its pgf by probabilistic arguments and elementary combinatorial identities only. Equation \eqref{Borelpgf} can equivalently be written in terms of random variables as 
\begin{equation}\label{Borelrv}
Y\eqd 1+\sum_{k=1}^{N}Y_k,
\end{equation}
where $Y,Y_1,Y_2,\ldots$ are i.i.d.\ random variables with values in $\nat$ and $N$ is an independent random variable with a Poisson distribution of parameter $\lambda\in(0,1]$.
\begin{theorem}\label{Boreldensity}
For $\lambda\in(0,1]$ the distribution of $Y$ in \eqref{Borelrv} is uniquely given by the Borel distribution of parameter $\lambda$ with
\begin{equation}\label{BoreldenseA}
P\{Y=n\}=\frac{(\lambda n)^{n-1}}{n!}\,e^{-\lambda n}\quad\text{ for }n\in\nat.
\end{equation}
\end{theorem}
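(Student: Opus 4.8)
The plan is to verify directly that the candidate probabilities in \eqref{BoreldenseA} satisfy the distributional fixed-point equation \eqref{Borelrv}, and then to invoke uniqueness of the solution. Since $Y$ takes values in $\nat$, I would first record that \eqref{Borelrv} forces $P\{Y=1\}=P\{N=0\}=e^{-\lambda}$, matching \eqref{BoreldenseA} at $n=1$. For $n\geq 2$, conditioning on the value of the independent Poisson variable $N$ gives
\begin{equation*}
P\{Y=n\}=\sum_{m=1}^{\infty}\frac{\lambda^m}{m!}\,e^{-\lambda}\,P\Big\{\sum_{k=1}^m Y_k=n-1\Big\},
\end{equation*}
so everything reduces to knowing the distribution of an $m$-fold convolution of the law under consideration. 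This is exactly a Borel--Tanner type quantity, which suggests the cleanest route: first establish the Borel--Tanner convolution formula \eqref{BorelTandense} for $Y^{(m)}=Y_1+\cdots+Y_m$ by induction on $m$ (the base case $m=1$ being \eqref{BoreldenseA} itself), and then feed it back into the conditioning identity above.

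Concretely, I would prove \eqref{BoreldenseA} and \eqref{BorelTandense} together by a single induction. Assuming the candidate formula \eqref{BoreldenseA} holds, the $m$-fold convolution \eqref{BorelTandense} follows from a combinatorial identity of Abel type, namely a specialization of the multinomial Abel sum already invoked in the proof of Theorem~\ref{Delmix}: summing the product $\prod_{k=1}^m (\lambda n_k)^{n_k-1}/n_k!$ over compositions $n_1+\cdots+n_m=n$ yields $\frac{m}{n}\binom{n}{?}$-type coefficients collapsing to $\frac{m(\lambda n)^{n-m}}{n\,(n-m)!}$. Substituting \eqref{BorelTandense} (with $n$ replaced by $n-1$ and the convolution order equal to $m$) into the conditioning identity, the inner sum becomes
\begin{equation*}
\sum_{m=1}^{n-1}\frac{\lambda^m}{m!}\,e^{-\lambda}\cdot\frac{m\,(\lambda(n-1))^{(n-1)-m}}{(n-1)\,(n-1-m)!}\,e^{-\lambda(n-1)},
\end{equation*}
and after pulling out constants this is a binomial-type sum in $m$ that I expect to telescope, via the elementary identity $\sum_{m}\binom{n-2}{m-1}a^{m-1}b^{(n-2)-(m-1)}=(a+b)^{n-2}$, into the closed form $(\lambda n)^{n-1}/n!\cdot e^{-\lambda n}$. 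This self-consistency check is the computational heart of the argument.

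The main obstacle is twofold. First, one must take genuine care that the exponent bookkeeping matches: the shift $Y\eqd 1+\sum Y_k$ means the convolution is evaluated at $n-1$, and the parameter inside the Borel--Tanner formula is $\lambda(n-1)$ rather than $\lambda n$, so the Abel/binomial identity does not apply verbatim and a short reindexing is needed to convert $(\lambda(n-1))^{\,\cdot}$ sums into the desired $(\lambda n)^{n-1}$. Second, and more fundamentally, the computation only shows that \eqref{BoreldenseA} \emph{solves} \eqref{Borelrv}; to conclude it is \emph{the} distribution of $Y$ I must separately argue uniqueness. For that I would appeal to the fact, already used in Section~2, that the generating function $G$ satisfying $G(z)=z\exp(\lambda(G(z)-1))$ is uniquely determined as an analytic function near $z=0$ with $G(0)=0$ --- equivalently, that matching $P\{Y=1\}$ and the recursion coming from conditioning on $N$ pins down $P\{Y=n\}$ for every $n\in\nat$ inductively, so any two $\nat$-valued solutions of \eqref{Borelrv} must coincide. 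Thus the direct verification, combined with this inductive uniqueness, completes the proof.
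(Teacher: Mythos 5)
Your overall architecture is sound and differs from the paper's in an interesting way. The paper proceeds by a single induction on $n$: it expands $P\{Y=n+1\}=\sum_{k}P\{N=k\}\sum_{n_1+\cdots+n_k=n}\prod_\ell P\{Y=n_\ell\}$ directly as a Poisson-weighted multinomial sum over compositions and collapses it with a dedicated combinatorial lemma (Lemma \ref{multid}), which simultaneously yields existence and uniqueness, since the right-hand side involves only $P\{Y=1\},\dots,P\{Y=n\}$. You instead first compute the $m$-fold convolution of the \emph{candidate} probabilities (the Borel--Tanner formula \eqref{BorelTandense}), then verify the fixed-point equation by a plain binomial expansion, and argue uniqueness separately by the same ``the recursion pins down $P\{Y=n\}$ inductively'' observation. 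Your binomial computation is correct: the Poisson mixture of $\frac{m(\lambda(n-1))^{n-1-m}}{(n-1)(n-1-m)!}e^{-\lambda(n-1)}$ does collapse to $\frac{(\lambda n)^{n-1}}{n!}e^{-\lambda n}$ (this is in effect the GPD formula of Theorem \ref{gpd} with $\theta=\lambda$ evaluated at $n-1$), and there is no circularity, since the convolution formula is derived for the explicit candidate, not for the unknown law of $Y$. Your route buys a cleaner final step (binomial theorem instead of a bespoke identity) at the cost of routing through the Borel--Tanner formula; the paper's route proves the Borel and Borel--Tanner formulas in the opposite order.

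The one substantive gap is that you do not prove the combinatorial identity on which everything rests, namely the evaluation of $\sum_{n_1+\cdots+n_m=n}\frac{n!}{n_1!\cdots n_m!}\prod_\ell\left(\frac{n_\ell}{n}\right)^{n_\ell-1}$, and the identity you point to as a substitute is not the right one: the Hurwitz multinomial Abel sum invoked in the proof of Theorem \ref{Delmix} has factors $(x+n_k)^{n_k}$, whereas here you need the variant with exponents $n_k-1$, which evaluates to $m!\,\binom{n-1}{m-1}$ rather than to a clean $n$-th power. This is precisely the content of Lemma \ref{multid}, which the paper proves by induction on $n$ using a one-dimensional Abel identity, and it is the computational heart of the appendix; asserting it as ``a specialization of the multinomial Abel sum'' leaves the main work undone. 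Relatedly, your description of proving \eqref{BoreldenseA} and \eqref{BorelTandense} ``together by a single induction'' with base case $m=1$ is muddled: for a verification argument no induction on $m$ is needed at all --- the convolution formula for the candidate follows in one step from the multinomial identity --- and the only induction genuinely required is the one on $n$ that delivers uniqueness. Supply a proof of the multinomial identity (or an exact reference to the correct Abel sum) and tighten the logical structure, and the argument is complete.
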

We will first prove a certain multinomial formula with variable frequencies which looks similar to a multinomial Abel identitiy; cf.\ section 1.6 in \cite{Rio}.
\begin{lemma}\label{multid}
For $n\in\nat$ and $k=1,\ldots,n$ we have
\begin{equation}\label{partial1}
\sum_{(n_1,\ldots,n_k)\in\nat^k\atop n_1+\cdots+n_k=n}\frac{n!}{n_1!\cdots n_k!\,k!}\prod_{\ell=1}^k\left(\frac{n_\ell}{n}\right)^{n_\ell-1}={n-1\choose k-1}.
\end{equation}
\end{lemma}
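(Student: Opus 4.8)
The plan is to reduce \eqref{partial1} to a pure sum over $\prod_\ell n_\ell^{n_\ell-1}$ and then recognise the result as a classical multinomial Abel (Hurwitz) sum. First I would factor the powers of the total out of the product: since $\sum_{\ell=1}^k(n_\ell-1)=n-k$ on the index set, one has $\prod_{\ell=1}^k(n_\ell/n)^{n_\ell-1}=n^{-(n-k)}\prod_{\ell=1}^k n_\ell^{n_\ell-1}$, so the constant $1/(k!\,n^{n-k})$ leaves the sum and \eqref{partial1} becomes equivalent to
\begin{equation*}
\sigma_k(n):=\sum_{(n_1,\ldots,n_k)\in\nat^k \atop n_1+\cdots+n_k=n}\binom{n}{n_1,\ldots,n_k}\prod_{\ell=1}^k n_\ell^{n_\ell-1}=\frac{k\,n!\,n^{n-k-1}}{(n-k)!}.
\end{equation*}
Indeed, multiplying the right-hand side by $1/(k!\,n^{n-k})$ and using $k/k!=1/(k-1)!$ collapses it to $\binom{n-1}{k-1}$, so it suffices to evaluate $\sigma_k(n)$.

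There are two routes to the value of $\sigma_k(n)$, and I would present the elementary one to stay in keeping with the appendix. Conceptually, $\binom{n}{n_1,\ldots,n_k}$ distributes the labels $\{1,\ldots,n\}$ into an ordered list of blocks of prescribed sizes, and $n_\ell^{n_\ell-1}$ counts the rooted labelled trees on a block of size $n_\ell$; thus $\sigma_k(n)$ is the number of ordered $k$-tuples of vertex-disjoint rooted trees partitioning $\{1,\ldots,n\}$. Reading off the ordered sequence of roots, this equals $\frac{n!}{(n-k)!}$ (the number of ordered root sequences) times the number of rooted forests on $\{1,\ldots,n\}$ with a prescribed $k$-element root set, which is $k\,n^{n-k-1}$; together these give the claimed value. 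This interpretation is a useful cross-check, but since it leans on forest-counting facts I would instead make the argument rigorous by an induction.

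For the rigorous argument I would induct on $k$. The base $k=1$ is the single term $n_1=n$, giving $\sigma_1(n)=n^{n-1}$ in agreement with the formula. For the step I would split off the last block according to its size $j$, which yields the convolution
\begin{equation*}
\sigma_k(n)=\sum_{j=1}^{n-k+1}\binom{n}{j}\,j^{\,j-1}\,\sigma_{k-1}(n-j),
\end{equation*}
since the multinomial coefficient factors as $\binom{n}{j}\binom{n-j}{n_1,\ldots,n_{k-1}}$. Substituting the inductive value of $\sigma_{k-1}$ reduces the step to an Abel-type binomial identity of exactly the shape tabulated in Riordan's \cite{Rio} section 1.6. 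I expect this Abel convolution (equivalently, the forest count $k\,n^{n-k-1}$) to be the one genuinely nontrivial ingredient; the factoring of the multinomial, the base case, and the final simplification to $\binom{n-1}{k-1}$ are all routine bookkeeping, and the whole argument avoids Lagrange's inversion formula, consistent with the aim of the appendix.
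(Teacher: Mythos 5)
Your proposal is correct and takes essentially the same route as the paper's proof: both peel off the last summand $n_k$, apply an inductive hypothesis to the remaining $k-1$ blocks (you induct on $k$, the paper on $n$), and reduce the resulting binomial convolution to the same Riordan Abel sum $A_\cdot(1,k-1;-1,-1)$ with closed form $\frac{k}{k-1}(x+y+N)^{N-1}$. The preliminary normalization to $\sigma_k(n)$ and the rooted-forest interpretation are nice cross-checks but do not change the substance of the argument.
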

\begin{proof}
If $k=1$ then $n_1=n$ and \eqref{partial1} reads
$$\frac{n!}{n!\,1!}\left(\frac{n}{n}\right)^{n-1}=1={n-1\choose 0}.$$
If $k=n$ then $n_1=\cdots=n_k=1$ and \eqref{partial1} reads
$$\frac{n!}{1!\cdots 1!\,n!}\prod_{\ell=1}^n\left(\frac1n\right)^0=1={n-1\choose n-1}.$$
We will now show \eqref{partial1} by induction over $n\in\nat$. First, if $n=1$ then $k=1$ and the validity of \eqref{partial1} has been shown above. Now assume that \eqref{partial1} is true for all non-negative integers up to some $n\in\nat$ and all $k=1,\ldots,n$, then formula \eqref{partial1} for $n+1$ only needs to be deduced for $k=2,\ldots,n$. We get
\begin{align*}
& \sum_{(n_1,\ldots,n_k)\in\nat^k\atop n_1+\cdots+n_k=n+1}\frac{(n+1)!}{n_1!\cdots n_k!\,k!}\prod_{\ell=1}^k\left(\frac{n_\ell}{n+1}\right)^{n_\ell-1}\\
& \quad =\sum_{m=1}^{n+2-k}\frac1{m!}\left(\frac{m}{n+1}\right)^{m-1}\sum_{(n_1,\ldots,n_{k-1})\in\nat^{k-1}\atop n_1+\cdots+n_{k-1}=n+1-m}\frac{(n+1)!}{n_1!\cdots n_{k-1}!\,k!}\prod_{\ell=1}^{k-1}\left(\frac{n_\ell}{n+1}\right)^{n_\ell-1}\\
& \quad =\sum_{m=1}^{n+2-k}\frac{(n+1)!}{m!\,(n+1-m)!}\left(\frac{m}{n+1}\right)^{m-1}\left(\frac{n+1-m}{n+1}\right)^{n+2-m-k}\frac1k{n-m\choose k-2}\\
& \quad =\sum_{m=1}^{n+2-k}\frac{{n+1\choose m}{n-m\choose k-2}}{{n-(k-1)\choose m-1}}\,\frac1k{n-(k-1)\choose m-1}\left(\frac{m}{n+1}\right)^{m-1}\left(\frac{n+1-m}{n+1}\right)^{n+2-m-k}\\
& \quad ={n\choose k-1}\frac{k-1}{k\,(n+1)^{n-k}}\sum_{m=0}^{n+1-k}{n+1-k\choose m}(m+1)^{m-1}(n-m)^{n-m-k},
\end{align*}
since by elementary calculations we have
$$\frac{{n+1\choose m}{n-m\choose k-2}}{{n-(k-1)\choose m-1}}={n\choose k-1}\frac{(n+1)(k-1)}{m(n+1-m)}.$$
The remaining sum on the right-hand side is an Abel sum $A_{n+1-k}(1,k-1,-1,-1)$ as defined in section 1.5 of \cite{Rio} and its closed-form solution is given by 
$$\sum_{m=0}^{n+1-k}{n+1-k\choose m}(m+1)^{m-1}(n-m)^{n-m-k}=\frac{k(n+1)^{n-k}}{k-1};$$
cf.\ \cite{Rio}, page 20. This proves \eqref{partial1} and concludes the proof.
\end{proof}
\begin{proof}[Proof of Theorem \ref{Boreldensity}]
We will prove the assertion inductively. First, we observe $P\{Y=1\}=P\{N=0\}=e^{-\lambda}$ which coincides with \eqref{BoreldenseA} for $n=1$. Now assume that formula \eqref{BoreldenseA} is true for  $P\{Y=1\},\ldots,P\{Y=n\}$ with some $n\in\nat$. Then we get using Lemma \ref{multid}
\begin{align*}
& P\{Y=n+1\}=P\left\{\sum_{k=1}^{N}Y_k=n\right\}=\sum_{k=1}^nP\{N=k\}\sum_{(n_1,\ldots,n_k)\in\nat^k\atop n_1+\cdots+n_k=n}\prod_{\ell=1}^kP\{Y=n_\ell\}\\
& \quad =\sum_{k=1}^n\frac{\lambda^k}{k!}\,e^{-\lambda}\sum_{(n_1,\ldots,n_k)\in\nat^k\atop n_1+\cdots+n_k=n}\prod_{\ell=1}^k\frac{n_\ell^{n_\ell}}{n_\ell!}\,\lambda^{n_\ell-1}e^{-n_\ell \lambda}\\
& \quad =\frac{\lambda^ne^{-(n+1)\lambda}}{(n+1)!}\sum_{k=1}^n\frac{(n+1)!}{k!}\sum_{(n_1,\ldots,n_k)\in\nat^k\atop n_1+\cdots+n_k=n}\prod_{\ell=1}^k\frac{n_\ell^{n_\ell}}{n_\ell!}\\
& \quad =\frac{\lambda^ne^{-(n+1)\lambda}}{(n+1)!}\sum_{k=1}^nn^{n-k}\sum_{(n_1,\ldots,n_k)\in\nat^k\atop n_1+\cdots+n_k=n}\frac{(n+1)!}{n_1!\cdots n_k!\,k!}\prod_{\ell=1}^k\left(\frac{n_\ell}{n}\right)^{n_\ell-1}\\
& \quad=\frac{\lambda^ne^{-(n+1)\lambda}}{(n+1)!}\,(n+1)\sum_{k=1}^nn^{n-k}{n-1\choose k-1}\\
& \quad =\frac{\lambda^ne^{-(n+1)\lambda}}{(n+1)!}\,(n+1)\sum_{k=0}^{n-1}{n-1\choose k}n^{(n-1)-k}=\frac{\lambda^ne^{-(n+1)\lambda}}{(n+1)!}(n+1)^n
\end{align*}
which proves the assertion. 
\end{proof}
The above elementary approach to the Borel distribution can also be used to derive its convolution powers as follows.
\begin{theorem}
For fixed $m\in\nat$ let $Y^{(m)}=Y_1+\cdots+Y_m$, where $Y_1,\ldots,Y_m$ are i.i.d.\ random variables with a Borel distribution of parameter $\lambda\in(0,1]$. Then $Y^{(m)}$ has a Borel-Tanner distribution with
\begin{equation*}
P\{Y^{(m)}=n\}=\frac{m\,(\lambda n)^{n-m}}{n\,(n-m)!}\,e^{-\lambda n}\quad\text{ for }n\in\nat\text{ with }n\geq m.
\end{equation*}
\end{theorem}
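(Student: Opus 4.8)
The plan is to evaluate the $m$-fold convolution of the Borel distribution directly, writing $P\{Y^{(m)}=n\}$ as a sum over ordered compositions of $n$ into $m$ positive parts and then invoking Lemma~\ref{multid} with $k=m$ to resolve the resulting combinatorial sum in closed form. Since $Y_1,\ldots,Y_m$ are i.i.d.\ with density \eqref{BoreldenseA}, I would first expand
$$P\{Y^{(m)}=n\}=\sum_{(n_1,\ldots,n_m)\in\nat^m\atop n_1+\cdots+n_m=n}\prod_{\ell=1}^m\frac{(\lambda n_\ell)^{n_\ell-1}}{n_\ell!}\,e^{-\lambda n_\ell}.$$
For $n<m$ this sum is empty and hence zero, in agreement with the claimed support $n\ge m$.

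Next I would pull out of the sum every factor independent of the particular composition. The exponentials multiply to $e^{-\lambda n}$, and the powers of $\lambda$ combine via $\sum_\ell(n_\ell-1)=n-m$ to give $\lambda^{n-m}$, leaving
$$P\{Y^{(m)}=n\}=\lambda^{n-m}\,e^{-\lambda n}\sum_{(n_1,\ldots,n_m)\in\nat^m\atop n_1+\cdots+n_m=n}\prod_{\ell=1}^m\frac{n_\ell^{n_\ell-1}}{n_\ell!}.$$
To match the shape of Lemma~\ref{multid}, I would factor $\prod_\ell n_\ell^{n_\ell-1}=n^{n-m}\prod_\ell(n_\ell/n)^{n_\ell-1}$ and multiply and divide by $n!\,m!$, rewriting the right-hand side as
$$\lambda^{n-m}\,e^{-\lambda n}\,\frac{n^{n-m}}{n!}\,m!\sum_{(n_1,\ldots,n_m)\in\nat^m\atop n_1+\cdots+n_m=n}\frac{n!}{n_1!\cdots n_m!\,m!}\prod_{\ell=1}^m\Big(\frac{n_\ell}{n}\Big)^{n_\ell-1}.$$

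At this stage Lemma~\ref{multid} with $k=m$ applies, which is legitimate precisely because the support condition $n\ge m$ translates into the admissible range $1\le m\le n$ of the lemma, and it evaluates the inner sum to $\binom{n-1}{m-1}$. The proof then closes with the elementary simplification
$$\frac{n^{n-m}}{n!}\,m!\binom{n-1}{m-1}=\frac{m\,n^{n-m-1}}{(n-m)!},$$
so that restoring the prefactor $\lambda^{n-m}e^{-\lambda n}$ produces $\frac{m(\lambda n)^{n-m}}{n\,(n-m)!}\,e^{-\lambda n}$, which is exactly \eqref{BorelTandense}. I expect no genuine obstacle here: the combinatorial heart of the argument has already been isolated in Lemma~\ref{multid}, so the only real work is the careful bookkeeping of the $\lambda$-powers and factorials, together with the verification that the index range of the lemma matches the support of $Y^{(m)}$.
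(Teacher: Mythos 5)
Your proposal is correct and follows essentially the same route as the paper: expand the $m$-fold convolution over compositions of $n$ into $m$ positive parts, extract the factor $\frac{m!\,(\lambda n)^{n-m}}{n!}e^{-\lambda n}$, and evaluate the remaining sum as $\binom{n-1}{m-1}$ via Lemma~\ref{multid} with $k=m$. The bookkeeping of the $\lambda$-powers, the normalization by $n!\,m!$, and the final simplification all match the paper's argument.
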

\begin{proof}
For $Y\eqd Y_1$ we get from \eqref{BoreldenseA}
\begin{align*}
& P\{Y^{(m)}=n\}=P\left\{\sum_{k=1}^{m}Y_k=n\right\}=\sum_{(n_1,\ldots,n_m)\in\nat^m\atop n_1+\cdots+n_m=n}\prod_{\ell=1}^mP\{Y=n_\ell\}\\
& \quad =\sum_{(n_1,\ldots,n_m)\in\nat^m\atop n_1+\cdots+n_m=n}\prod_{\ell=1}^m\frac{n_\ell^{n_\ell-1}}{n_\ell!}\,\lambda^{n_\ell-1}e^{-\lambda n_\ell}\\
& \quad =\frac{m! (\lambda n)^{n-m}}{n!}\,e^{-\lambda n}\sum_{(n_1,\ldots,n_m)\in\nat^m\atop n_1+\cdots+n_m=n}\frac{n!}{n_1!\cdots n_m!\,m!}\prod_{\ell=1}^m\left(\frac{n_\ell}{n}\right)^{n_\ell-1}\\
& \quad =\frac{m! (\lambda n)^{n-m}}{n!}\,e^{-\lambda n}{n-1\choose m-1}=\frac{m\,(\lambda n)^{n-m}}{n\,(n-m)!}\,e^{-\lambda n},
\end{align*}
where the first equality in the last line follows from Lemma \ref{multid}.
\end{proof}

\end{document}